\newcolumntype{L}[1]{>{\raggedright\let\newline\\\arraybackslash\hspace{0pt}}m{#1}}
\newcolumntype{C}[1]{>{\centering\let\newline\\\arraybackslash\hspace{0pt}}m{#1}}
\newcolumntype{R}[1]{>{\raggedleft\let\newline\\\arraybackslash\hspace{0pt}}m{#1}}
\theoremstyle{theoremstyle}
\newtheorem{theorem}{Theorem}
\newtheorem{lemma}[theorem]{Lemma}
\newtheorem{proposition}[theorem]{Proposition}
\newtheorem{corollary}[theorem]{Corollary}
\theoremstyle{definition}
\newtheorem{definition}[theorem]{Definition}
\newtheorem{example}[theorem]{Example}
\newtheorem{remark}[theorem]{Remark}
\newcommand{\ma}{\mathfrak{a}}
\newcommand{\ini}{\text{in}}
\newcommand{\mI}{\mathfrak{I}}
\newcommand{\bigslant}[2]{{\raisebox{.2em}{$#1$}\left/\raisebox{-.2em}{$#2$}\right.}}
\newcommand{\suchthat}{\;\ifnum\currentgrouptype=16 \middle\fi|\;}
\DeclareMathOperator{\ord}{ord}
\DeclareMathOperator{\initial}{in}
\DeclareMathOperator{\LM}{LM}
\DeclareMathOperator{\Mon}{Mon}
\DeclareMathOperator{\NF}{NF}
\DeclareMathOperator{\Hilb}{H}
\DeclareMathOperator{\HilbP}{P}
\DeclareMathOperator{\HilbS}{HS}
\DeclareMathOperator{\HilbSP}{HSP}
\DeclareMathOperator{\N}{\mathbb{N}}
\DeclareMathOperator{\Z}{\mathbb{Z}}
\DeclareMathOperator{\Q}{\mathbb{Q}}
\DeclareMathOperator{\R}{\mathbb{R}}
\DeclareMathOperator{\A}{\mathbb{A}}
\DeclareMathOperator{\bK}{K}
\DeclareMathOperator{\bH}{{\bf H}}
\DeclareMathOperator{\Quot}{Quot}
\DeclareMathOperator{\Gr}{Gr}
\DeclareMathOperator{\syz}{syz}
\title[Algorithms for computing in localizations at prime ideals]%
{Mora's holy grail$^{1}$: Algorithms for computing in localizations at prime ideals}
\author{Magdaleen S. Marais}
\address{Magdaleen S. Marais\\
African Institute for Mathematical Sciences and University of Pretoria\\
Department of Mathematics and Applied Mathematics\\
0002 Pretoria\\
South Africa}
\email{magdaleen@aims.ac.za}
\author{Yue Ren}
\address{Yue Ren\\
Department of Mathematics\\
University of Kaiserslautern\\
Erwin-Schr\"odinger-Str.\\
67663 Kaiserslautern\\
Germany}
\email{ren@mathematik.uni-kl.de}
\thanks{This research was supported by the African Institute for Mathematical Sciences, the Department of Mathematics and Applied Mathematics of the University of Pretoria and a grant awarded by Wolfram Decker. We are thankful to all of them.}
\keywords{local ring, associated graded ring, localization, resolution}
\subjclass[2010]{Primary 13H99; Secondary 13P10, 14Q99}
\begin{document}

\begin{abstract}
This article discusses a computational treatment of the localization $A_L$ of an affine coordinate ring $A$ at a prime ideal $L$ and its associated graded ring $\Gr_\ma(A_L)$ with the means of standard basis techniques. Building on Mora's work \cite{M1991}, we present alternative proofs on two of the central statements and expand on the applications mentioned by Mora: resolutions of ideals, systems of parameters and Hilbert polynomials, as well as dimension and regularity of $A_L$. All algorithms are implemented in the library {\tt graal.lib} for the computer algebra system \textsc{Singular}.
\end{abstract}

\maketitle

\section{Introduction}
\footnotetext[1]{see \cite{M1991}.}
Computer algebra systems like \textsc{Macaulay2} and \textsc{Singular} have become a staple in studying affine or projective varieties globally, by examining their coordinate rings or homogeneous coordinate rings. 

The behaviour of an affine variety $X\subseteq k^n$ around a single point $p\in X$ is described by the localization $A(X)_{\mathfrak m_p}$ of its coordinate ring $A(X)$ at the corresponding maximal ideal $\mathfrak{m}_p$. This local ring is commonly realized by applying an affine coordinate transformation $\varphi:k^n \overset{\sim}{\longrightarrow} k^n$, shifting the point $p$ into the origin $0$. The respective localization $A(\varphi(X))_{\mathfrak{m}_{0}}$ can then be simulated by working with a local monomial ordering. This technique has been applied in the study of isolated singularities to great success.

In this article, we discuss a related approach for the realization of the localization $A_L$ of an affine coordinate ring $A$ at a prime ideal $L$, see Lemma~\ref{lem:Al}. Introduced by Mora in \cite{M1991}, it uses a local ordering on a specific ring to mimic the local structure of $A_L$. From it, one can also deduce a corresponding representation of its associated graded ring $\Gr_\ma(A_L)$, $\ma\unlhd A_L$ denoting its maximal ideal, see Lemma~\ref{lem:graal}. In Section~\ref{HolyGrail}, we discuss the aforementioned Lemmata, giving alternative proofs than in \cite{M1991}.


Because a graded ring $\Gr_\ma(A_L)$ is computationally easier to handle than a local ring $A_L$, we show in Section~\ref{Applications} how to exploit the $\ma$-adic topology on $A_L$, allowing us to work over $\Gr_\ma(A_L)$ instead. Albeit not viable for every problem, a lot of information about $A_L$ can be found preserved in $\Gr_\ma(A_L)$. The problems covered are the Hilbert polynomials of ideals in $A_L$, a system of parameters for $A_L$ as well as the regularity and dimension of $A_L$. In particular, as stated in \cite{M1991}, we present an algorithm for lifting a resolution of an initial ideal $\initial_\ma(I)\unlhd\Gr_\ma(A_L)$ to a resolution of the ideal $I\unlhd A_L$.

All algorithms and examples in this article have been implemented in the \textsc{Singular} library {\tt graal.lib} \cite{graal}, which also showcases some of the new object-oriented features of \textsc{Singular}. The library will be available in the next \textsc{Singular} distribution.

\section{An Algorithm for the Associated Graded Ring $\Gr_a(A_L)$}\label{HolyGrail}

In this article, let $Q=k[X]$ be a multivariate polynomial ring over a ground field $k$, and $H\subseteq J\unlhd Q$ two prime ideals. Let $A:=Q/H$ be the affine coordinate ring of the variety defined by $H$, and let $L:=J\cdot A\unlhd A$ be a prime ideal describing an irreducible subvariety.

Let $U\subseteq X$ be a maximal independent set of variables with respect to $J$, and let $V:=X\setminus U$ denote the remaining variables, so that $J\cdot k(U)[V] \unlhd k(U)[V]$ becomes a maximal ideal. Set $Q^0:=k(U)[V]$, $H^0=H\cdot Q^0$ and $J^0=J\cdot Q^0$.

Next, fix a system of generators $J=\langle f_1,\ldots,f_s\rangle$, and consider the polynomial ring $Q[Y]:=Q[Y_1,\ldots,Y_s]$. Set $\mI:=\langle f_1-Y_1,\ldots,f_s-Y_s \rangle + H^0 \unlhd Q^0[Y]$.
Let $\ma:=L\cdot A_L$ denote the maximal ideal of $A_L$, $K:=A_L/\ma$ its residue field.

\begin{center}
  \begin{tikzpicture}
    \matrix (m) [matrix of math nodes, row sep=2em, column sep=3em, column 1/.style={anchor=base east},]
    { Q:=k[X] & Q^0:=k(U)[V] & & \ma \unlhd A_L\\
      A:=Q/H  &              & & \phantom{hi}  \\ };
    \draw[->] (m-1-1) -- (m-1-2);
    \draw[->>] ($(m-1-1.south)+(-0.7,0)$) -- ($(m-2-1.north)+(-0.7,0)$);
    \node[anchor=east] (unlhdUp) at (m-1-1.west) {$\unlhd$};
    \node[anchor=east] (J) at (unlhdUp.west) {$J$};
    \node[anchor=south,yshift=0.3cm,font=\footnotesize] (f) at (J.north) {\phantom{p}$\langle f_1,\ldots,f_s\rangle$\phantom{p}};
    \draw[draw opacity=0] (f) -- node[sloped,font=\footnotesize] {$=$} (J);
    \node[anchor=east] (subseteq) at (J.west) {$\subseteq$};
    \node[anchor=east] (H) at (subseteq.west) {$H$};
    \node[anchor=east] (unlhdDown) at (m-2-1.west) {$\unlhd$};
    \node[anchor=east] (L) at (unlhdDown.west) {$L:=J\cdot A$};
    \node (V0) at (m-1-2.19.5) {};
    \node[anchor=south,yshift=0.1cm,font=\footnotesize] (V1) at (V0.north) {$X\setminus U$};
    \draw[draw opacity=0] (V0) -- node[sloped,font=\footnotesize] {$=$} (V1);
    \node (U0) at (m-1-2.320) {};
    \node[anchor=north,yshift=-0.3cm,font=\footnotesize] (U1) at (U0.south) {maximal independent set};
    \node[anchor=north,yshift=0.2cm,font=\footnotesize] (U2) at (U1.south) {with respect to $J$};
    \draw[shorten <= -5pt] (U0) -- (U1);

    \node[xshift=0.25cm] (K) at (m-2-4) {$K$};
    \draw[->>] ($(m-1-4.south)+(0.25,0)$) -- (K.north);

    \node[anchor=south,xshift=-0.5cm,yshift=0.4cm,font=\footnotesize] (maxideal) at (m-1-4.north) {maximal ideal};
    \draw (maxideal.south) -- ($(m-1-4.north)+(-0.5,0)$);
  \end{tikzpicture}
\end{center}

Throughout the entire article, we will be abbreviating $(Y_1,\ldots,Y_s)$ with $Y$, $(f_1,\ldots,f_s)$ with $f$ and also, for $\alpha=(\alpha_1,\ldots,\alpha_s)\in\N^s$, $Y_1^{\alpha_1}\cdots Y_s^{\alpha_s}$ with $Y^\alpha$, $f_1^{\alpha_1}\cdots f_s^{\alpha_s}$ with $f^\alpha$.

Set $w\in\R^{|V|}\times\R^{|Y|}$ to be the weight vector which weights the variables $V$ with $0$ and the variables $Y$ with $-1$. In particular, for a polynomial $f\in Q^0[Y]$, $-\deg_w(f)$ will then be the lowest degree in $Y$ occurring in it. Let $>$ be a weighted ordering on $\Mon(V,Y)$ with weight vector $w$ and any global ordering $>'$ on $\Mon(V)$ as tiebreaker, i.e.
\begin{align*}
  V^\beta\cdot Y^\alpha > V^\delta\cdot Y^\gamma \quad :\Longleftrightarrow \quad \big(|\alpha|<|\gamma|\big) \;\text{ or }\; \big(|\alpha|=|\gamma| \text{ and } V^\beta >' V^\delta\big)
\end{align*}
with $V^\beta>'1$ for all $\beta\in\N^{|V|}$.

Let $Q^0[Y]_>$ denote the localization of $Q^0[Y]$ at the monomial ordering $>$,
\[ Q^0[Y]_> := \left\{ \frac{f}{u} \suchthat f,u\in Q^0[Y] \text{ and } \LM_>(u)=1 \right\}. \]
The notions of leading monomials then extend naturally from $Q^0[Y]$ to $Q^0[Y]_>$.

\begin{lemma}[corresponding to Lemma 6.3 in \cite{M1991}]\label{lem:Al}
  The map $Q[Y] \rightarrow A$, $Y^\alpha\mapsto \overline{f^\alpha}$ extends naturally to a map $\phi: Q^0[Y]_> \rightarrow A_L$, inducing an exact sequence of filtered rings
  \begin{center}
    \begin{tikzpicture}[description/.style={fill=white,inner sep=2pt}]
      \matrix (m) [matrix of math nodes, row sep=0.5em,column sep=0em,text height=1.5ex, text depth=0.25ex]
      { 0 &\longrightarrow& \mI \cdot\, Q^0[Y]_> &\longrightarrow& Q^0[Y]_> &\overset{\phi}{\longrightarrow}& A_L &\longrightarrow& 0. \\
          \\
          \\
          & & & & \langle Y \rangle^d\cdot Q^0[Y]_> & \longmapsto & \ma^d\cdot A_L & \\
          & & & & Y^\alpha & \longmapsto & \overline{f^\alpha} & \\ };
        \draw[draw opacity=0] (m-4-5) -- node[sloped] {$\subseteq$} (m-1-5);
        \draw[draw opacity=0] (m-4-7) -- node[sloped] {$\subseteq$} (m-1-7);
        \node[anchor=north,font=\footnotesize,yshift=-0.35cm,xshift=-0.8cm] (I) at (m-1-3.south) {$\langle f_1-Y_1,\ldots,f_s-Y_s \rangle + H^0$};
        \draw[draw opacity=0] (I) -- node[sloped,font=\footnotesize] {$=$} (m-1-3.200);
    \end{tikzpicture}
  \end{center}
\end{lemma}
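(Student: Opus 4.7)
The plan is to construct $\phi$ first, then verify the inclusion of $\mI\cdot Q^0[Y]_>$ in $\ker\phi$ together with its converse, surjectivity of $\phi$, and the filtration compatibility separately.

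To define $\phi$, I would extend the natural map $Q[Y]\to A$, $Y_i\mapsto\overline{f_i}$, in two stages. Because $U$ is independent with respect to $J$ we have $k[U]\cap J=\{0\}$, hence (using $H\subseteq J$) every nonzero element of $k[U]$ maps to $A\setminus L$ and thus to a unit of $A_L$; this extends the map to $Q^0[Y]=k(U)[V][Y]\to A_L$. The further extension to $Q^0[Y]_>$ requires that $\LM_>(u)=1$ implies $\phi(u)$ is a unit in $A_L$. The chosen ordering ranks every monomial of positive $Y$-degree strictly below $1$, while any $V^\beta$ with $\beta\neq 0$ is strictly above $1$; so $\LM_>(u)=1$ forces $u=c_0+u_+$ with $c_0\in k(U)^\ast$ and $u_+$ a sum of terms each of positive $Y$-degree. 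Then $\phi(u_+)\in\ma$ and $c_0$ is a unit, so $\phi(u)=c_0+\phi(u_+)$ is a unit of $A_L$, and the universal property of localization produces $\phi:Q^0[Y]_>\to A_L$.

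The inclusion $\mI\cdot Q^0[Y]_>\subseteq\ker\phi$ is immediate from $\phi(f_i-Y_i)=0$ and $\phi(H^0)=0$. For the reverse, I would take $g\in Q^0[Y]$ with $\phi(g)=0$, clear $U$-denominators (multiplying by some $s\in k[U]\setminus\{0\}$, a unit in $Q^0$) to reduce to $g\in Q[Y]$, and apply the Taylor identity $g(X,Y)-g(X,f)\in\langle f-Y\rangle$. The residual $g(X,f)\in Q$ satisfies $\phi(g(X,f))=0$ in $A_L$, and because $A=Q/H$ is a domain embedding into $A_L$, this forces $g(X,f)\in H\subseteq H^0$, hence $g\in\mI$. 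For surjectivity I would exploit that the maximal-independence of $U$ makes $Q^0/J^0=K$ a field, so any $b\in Q\setminus J$ is invertible modulo $J^0$: there exist $h\in Q$, $c\in k[U]\setminus\{0\}$ and $g_i\in Q$ with $bh=c+\sum g_if_i$. Setting $u:=1+\sum(g_i/c)Y_i$, one checks $\LM_>(u)=1$ since the only $Y$-degree-zero monomial is $1$, so $u$ is a unit of $Q^0[Y]_>$, and $\phi(cu)=c+\sum \overline{g_if_i}=\overline{bh}$, whence $\phi(ah/(cu))=\overline a/\overline b$ for any $a\in Q$.

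Finally, the filtration compatibility is automatic: since $\phi(Y_i)=\overline{f_i}$ and the $\overline{f_i}$ generate $L\cdot A_L=\ma$, the ideal $\langle Y\rangle^d\cdot Q^0[Y]_>$ is mapped onto $\ma^d\cdot A_L$. The main technical hurdle is the surjectivity step, where the maximal-independence of $U$ must be converted into the explicit unit $u\in Q^0[Y]_>$ that absorbs the localization denominator; the kernel computation, though natural via the Taylor identity, also relies crucially on $H$ being prime so that $A\hookrightarrow A_L$.
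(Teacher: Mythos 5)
Your proof is correct and follows essentially the same route as the paper's: well-definedness rests on units coming from $k(U)$ and on the observation that $\LM_>(u)=1$ forces $u=c_0+(\text{positive }Y\text{-degree terms})$, and surjectivity is obtained from the maximality of $J^0$ together with the fact that $1+(\text{positive }Y\text{-degree})$ is a unit in $Q^0[Y]_>$. The only real difference is that the paper simply declares the kernel identification $\ker\phi=\mI\cdot Q^0[Y]_>$ to be clear, whereas you spell it out via the Taylor identity $g(X,Y)-g(X,f)\in\langle f-Y\rangle$ and the injectivity of $A\hookrightarrow A_L$ (using that $H$ is prime); this is a welcome elaboration of a step the paper elides rather than a different strategy.
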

\begin{proof}
  Before we show exactness, note that the map $\phi$ is well-defined because the residues of the algebraically independent variables $U$ in $A$ are naturally not contained in $L$, making them invertible in $A_L$. Moreover, any polynomial $g\in Q^0[Y]$ with $\LM_>(g)=1$ has to be of the form $g=c+p$ for some $c\in k(U)$, $p\in\langle Y \rangle$, and hence it is also mapped to something invertible in $A_L$.

  It is clear that $\mI\cdot Q^0[Y]_>$ is the kernel of $\phi$. In order to show that $\phi$ is surjective, it suffices to find elements mapping to $p^{-1}$ for $p\notin L$.

  Let $p\notin L$ and let $g=\sum_{\alpha} c_\alpha \cdot Y^\alpha\in Q^0[Y]_>$, with $c_\alpha\in Q^0$, be a preimage of $p$. Clearly, $c_0\neq 0$ since $Y$ is mapped into $L$. We will now show that $c_0$ is invertible modulo $\mI$, which implies that $g$ is invertible modulo $\mI$ and hence completes the proof. Because $J^0\unlhd Q^0$ is maximal, there exist $a\notin J^0$ and $b\in J^0$ such that $a\cdot c_0=1+b$. And since $J^0=\langle f_1,\ldots,f_s\rangle$ and $\mI\supseteq\langle f_1-Y_1,\ldots,f_s-Y_s\rangle$, $a\cdot c_0=1+p_b \text{ mod } \mI$, for some $p_b\in\langle Y_1,\ldots,Y_s\rangle$. Due to the ordering $>$, $1+p_b$ is invertible.
\end{proof}

\begin{definition}\label{DefwInitial} We define the \emph{associated graded ring} of $A_L$ with respect to $\ma$ to be
\[\Gr_a(A_L):=\bigoplus_{n=0}^{\infty}\ma^n/\ma^{n+1}=A_L/\ma \oplus\,\ma/\ma^2\oplus\ldots.\]
Given $c\in A_L\setminus\{0\}$, the \emph{valuation} $\nu_\ma(c)$ of $c$ with respect to $\ma$ is the unique $n\in\N$ such that $c\in (\ma^n\cdot A_L)\setminus (\ma^{n+1}\cdot A_L)$, and the \emph{initial form} $\initial_\ma(c)$ of $c$ with respect to $\ma$ is its residue class $\overline{c}\in(\ma^{n}\cdot A_L)/(\ma^{n+1}\cdot A_L)\subseteq \Gr_\ma(A_L)$. For an ideal $I\unlhd A_L$, the \emph{initial form} of $I$ with respect to $\ma$ is \[ \initial_\ma(I)=\langle \initial_\ma(c)\mid c\in I \rangle. \]

For our weight vector $w\in \R^{|V|+|Y|}$ and any $g=\sum_{\alpha,\beta}c_{\alpha,\beta} \cdot V^\beta Y^\alpha\in Q^0[Y]$, we define the \emph{initial form} of $g$ with respect to $w$ to be $\initial_w(g)=\sum_{w\cdot (\beta,\alpha)\,\text{max}}\,c_{\alpha,\beta}\cdot V^\beta Y^\alpha \in Q^0[Y]$.
Naturally, for an ideal $J\unlhd Q^0[Y]$, the \emph{initial form} of $J$ with respect to $w$ is \[\initial_w(J)=\langle \initial_w(g)\mid g\in J\rangle.\]
\end{definition}

The map $\initial_w: Q^0[Y] \rightarrow Q^0[Y]$ naturally induces a map $Q^0[Y]_> \rightarrow (Q^0\!/\!J^0)[Y]=K[Y]$, as $\LM_>(p)=1$ implies $\initial_w(p)\in Q^0$ whose image then lies in $K$ and therefore is invertible. We will denote this map with $()_\ini$:
\begin{center}
  \begin{tikzpicture}[description/.style={fill=white,inner sep=2pt}]
    \matrix (m) [matrix of math nodes, row sep=1.5em,column sep=4.5em,text height=1.5ex, text depth=0.25ex]
    { Q^0[Y]_> & K[Y] \\
      Q^0[Y]{}_{\phantom{\ge}} & Q^0[Y] \\ };
    \draw[->] (m-1-1) -- node[above] {$()_\ini$} (m-1-2);
    \draw[->] (m-2-1) -- node[above] {$\initial_w$} (m-2-2);
    \draw[->>] (m-2-2) -- (m-1-2);
    \draw[draw opacity=0] (m-2-1) -- node[sloped] {$\subseteq$} (m-1-1);
  \end{tikzpicture}
\end{center}

Abusing the notation by abbreviating the ideal $\mI\cdot Q^0[Y]$ with $\mI$, we then get:

\begin{lemma}[Lemma 6.6 in \cite{M1991}]\label{lem:graal}
  We have an exact sequence of graded rings
  \begin{center}
    \begin{tikzpicture}[description/.style={fill=white,inner sep=2pt}]
      \matrix (m) [matrix of math nodes, row sep=0.5em,column sep=0em,text height=1.5ex, text depth=0.25ex]
      { 0 &\longrightarrow& \mI_\ini \cdot \, K[Y] &\longrightarrow& K[Y]_{\phantom{d}} &\overset{\lambda}{\longrightarrow}& \Gr_\ma(A_L) &\longrightarrow& 0. \\
          \\
          \\
          & & & & K[Y]_d & \longrightarrow & \ma^d/\ma^{d+1} & \\
          & & & & Y^\alpha & \longmapsto & \overline{f^\alpha} & \\ };
        \draw[draw opacity=0] (m-4-5) -- node[sloped] {$\subseteq$} (m-1-5);
        \draw[draw opacity=0] (m-4-7) -- node[sloped] {$\subseteq$} (m-1-7);
    \end{tikzpicture}
  \end{center}
\end{lemma}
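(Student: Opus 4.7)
The plan is to construct $\lambda$ explicitly by lifting $K[Y]_d$ back to $Q^0[Y]$ and pushing forward via $\phi$, then verify surjectivity and both kernel inclusions using Lemma~\ref{lem:Al}. For $h = \sum_{|\alpha|=d} c_\alpha Y^\alpha \in K[Y]_d$, I would lift each $c_\alpha \in K$ to some $\tilde c_\alpha \in Q^0$ and define
\[ \lambda(h) \;:=\; \overline{\phi\bigl(\textstyle\sum_\alpha \tilde c_\alpha Y^\alpha\bigr)} \;=\; \sum_\alpha \overline{\tilde c_\alpha f^\alpha} \;\in\; \ma^d/\ma^{d+1}. \]
The routine verifications (well-definedness of $\lambda$, the ring-homomorphism property, and surjectivity) all reduce to the fact that $\phi$ matches the $\langle Y\rangle$-filtration on $Q^0[Y]_>$ with the $\ma$-filtration on $A_L$: $\phi(J^0) \subseteq \ma$ makes the choice of lift irrelevant modulo $\ma^{d+1}$, while $\phi(\langle Y\rangle^d Q^0[Y]_>) = \ma^d$ yields surjectivity on each graded piece.

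The inclusion $\mI_\ini K[Y] \subseteq \ker\lambda$ is similarly easy: for a generator $g \in \mI$ write $g = \sum_{d\ge d_0} g_d$ with $g_d$ of pure $Y$-degree $d$, so that $\initial_w(g) = g_{d_0}$; then $\phi(g) = 0$ forces $\phi(g_{d_0}) = -\sum_{d > d_0}\phi(g_d) \in \ma^{d_0+1}$, whence $\lambda(\overline{g_{d_0}}) = 0$.

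The main obstacle will be the reverse inclusion $\ker\lambda \subseteq \mI_\ini K[Y]$, which requires careful shuttling between $Q^0[Y]_>$ and $Q^0[Y]$. Given $h \in K[Y]_d$ with $\lambda(h) = 0$, I would lift to $\tilde h \in Q^0[Y]$ of pure $Y$-degree $d$; then $\phi(\tilde h) \in \ma^{d+1}$ produces some $g \in \langle Y\rangle^{d+1} Q^0[Y]_>$ with $\tilde h - g \in \mI Q^0[Y]_>$. Writing $g = g_1/u_1$ and $\tilde h - g = p/u$ with $g_1 \in \langle Y\rangle^{d+1}$, $p \in \mI$, and $\LM_>(u) = \LM_>(u_1) = 1$, clearing denominators produces the polynomial identity $u u_1 \tilde h - u g_1 = p u_1 \in \mI$ in $Q^0[Y]$. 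Since $\initial_w(u), \initial_w(u_1) \in k(U)\setminus\{0\}$ and $u g_1$ has $Y$-degree at least $d+1$, applying $\initial_w$ should yield
\[ \initial_w(u)\,\initial_w(u_1)\,\tilde h \;=\; \initial_w(p u_1) \;\in\; \mI_\ini, \]
and reducing modulo $J^0$ and dividing by the nonzero scalar $\overline{\initial_w(u)\,\initial_w(u_1)} \in K^\times$ then gives $h \in \mI_\ini K[Y]$, completing the argument.
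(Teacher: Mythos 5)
Your proof is correct and follows essentially the same strategy as the paper's: both directions reduce to the exact sequence of Lemma~\ref{lem:Al} by comparing the $Y$-degree filtration on $Q^0[Y]_>$ with the $\ma$-adic filtration on $A_L$. Your treatment of the easy inclusion $\mI_\ini K[Y]\subseteq\ker\lambda$ is identical to the paper's. For the harder inclusion $\ker\lambda\subseteq\mI_\ini K[Y]$, the paper's version tacitly chooses preimages $a_{\alpha,0},b_{\beta,0}\in Q^0$ with $\phi(a_{\alpha,0})=a_\alpha$, $\phi(b_{\beta,0})=b_\beta$, glossing over the point that $\phi|_{Q^0}$ is not surjective onto $A_L$ (only onto the subring generated by $A$ and $k[U]^{-1}$), so that the preimage of $\phi(\tilde h)\in\ma^{d+1}$ must in general be taken in $\langle Y\rangle^{d+1}Q^0[Y]_>$ rather than as a polynomial $\sum b_{\beta,0}Y^\beta$ with $b_{\beta,0}\in Q^0$; your version makes this explicit by writing $\tilde h - g = p/u$ and clearing the denominators $u,u_1$, then using that $\initial_w(u),\initial_w(u_1)\in k(U)^\times$ (which follows from $\LM_>(u)=\LM_>(u_1)=1$ and $V^\beta>'1$) to cancel them after applying $\initial_w$. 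This is the same argument, but your bookkeeping with the units of $Q^0[Y]_>$ is the more careful rendering of the step the paper compresses.
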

\begin{proof}
It is clear that $\lambda$ is surjective, so remains to show that $\ker(\lambda)=\mI_\ini$.

Suppose $\sum_{|\alpha|=d}\overline{a}_\alpha Y^\alpha \in \ker(\lambda)$ for some $a_\alpha\in A_L$. Then $\sum_{|\alpha|=d}a_\alpha f^\alpha\in\ma^{d+1}$, say $\sum_{|\alpha|=d}a_\alpha f^\alpha=\sum_{|\beta|=d+1}b_\beta f^\beta$ for suitable $b_\beta\in A_L$. Recall the exact sequence in Lemma \ref{lem:Al}. Pick $a_{\alpha,0}, b_{\beta,0}\in Q^0$ such that $\phi(a_{\alpha,0})=a_\alpha$ and $\phi(b_{\beta,0})=b_\beta$ for all $\alpha, \beta$. Then $\sum_{|\alpha|=d}a_{\alpha,0} Y^\alpha-\sum_{|\beta|=d+1}b_{\beta,0} Y^\beta \in \ker(\phi)=\mI$. Thus
\[ \Big(\sum_{|\alpha|=d}a_{\alpha,0} Y^\alpha-\sum_{|\beta|=d+1}b_{\beta,0} Y^\beta\Big)_\ini=\Big(\sum_{|\alpha|=d}a_{\alpha,0} Y^\alpha\Big)_\ini=\sum_{|\alpha|=d}\overline{a}_\alpha Y^\alpha\in\mI_\ini.\]

Conversely, suppose $g_\ini \in\mI_\ini$ for some $g \in \mI$. Writing $g=h+r$, where $h$ is the sum over the terms of lowest degree in $Y$, we have
\[\phi(g)=\underbrace{\phi(h)}_{\in \ma^d}+\underbrace{\phi(r)}_{\in \ma^{d+1}}=0, \text{ where } d=-\deg_w(g) \text{ is the degree in } Y \text{ of g}.\]
This implies that $\lambda(g_\ini)=\lambda(h_\ini)=\overline{0}\in\ma^d/\ma^{d+1}$.
\end{proof}

\begin{proposition}\label{prop:AlGraal}
  The previous two Lemmas yield a diagram
  \begin{center}
    \begin{tikzpicture}[description/.style={fill=white,inner sep=2pt}]
      \matrix (m) [matrix of math nodes, row sep=1.5em,column sep=0em,text height=1.5ex, text depth=0.25ex]
      { 0 &\longrightarrow& \mI \cdot\, Q^0[Y]_> &\longrightarrow& Q^0[Y]_> &\overset{\phi}{\longrightarrow}& A_L &\longrightarrow& 0\phantom{.} \\
        0 &\longrightarrow& \mI_\ini \cdot \, K[Y]{{}_{\phantom{>}}} &\longrightarrow& K[Y]{{}_{\phantom{>}}} &\overset{\lambda}{\longrightarrow}& \Gr_\ma(A_L) &\longrightarrow& 0. \\ };
      \path[->](m-1-3) edge node[right] {$()_\ini$} (m-2-3);
      \path[->](m-1-5) edge node[right] {$()_\ini$} (m-2-5);
      \path[->](m-1-7) edge node[right] {$\initial_\ma()$} (m-2-7);
    \end{tikzpicture}
  \end{center}
  For any $f\in Q^0[Y]_>$ we can calculate the initial form of its image in $A_L$ by
  \[ \initial_\ma(\phi(f)) = \lambda(\NF(f,G_{\mI})_\ini) \quad \text{with} \quad \nu_\ma (\phi(f)) = -\deg_w (\NF(f,G_{\mI})). \]
  In particular, if $\LM_>(f)\notin \LM_>(G_\mI)$, then
  \[ \initial_\ma(\phi(f)) = \lambda(f_\ini) \quad \text{and} \quad \nu_\ma (\phi(f)) = -\deg_w (f). \]
\end{proposition}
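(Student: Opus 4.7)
The plan is to reduce the general formula to the ``in particular'' special case via normal-form reduction, and then to prove that case by a direct $\LM_>$-argument using the standard-basis property of $G_\mI$.

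\textbf{Reduction.} Let $h := \NF(f,G_\mI)$. Since $G_\mI$ is a standard basis of $\mI$ with respect to $>$, we have $f - h \in \mI\cdot Q^0[Y]_>$, whence $\phi(f) = \phi(h)$, and by construction of the normal form $h \neq 0$ implies $\LM_>(h) \notin \LM_>(G_\mI)$. So it suffices to prove the ``in particular'' statement and apply it to $h$ (the degenerate case $h = 0$, i.e.\ $\phi(f)=0$, is vacuous).

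\textbf{Special case.} Assume $\LM_>(f) \notin \LM_>(G_\mI)$ and set $d := -\deg_w(f)$. Decompose $f = g + r$ with $g := \initial_w(f)$ of pure $Y$-degree $d$ and $r$ of $Y$-degree $\geq d+1$. From $\phi(Y^\alpha) = \overline{f^\alpha}\in\ma^{|\alpha|}$ we obtain $\phi(g)\in\ma^d$ and $\phi(r)\in\ma^{d+1}$, hence $\nu_\ma(\phi(f))\geq d$. Suppose for contradiction that $\phi(f)\in\ma^{d+1}$. Since $\ma^{d+1} = \phi(\langle Y\rangle^{d+1}\cdot Q^0[Y]_>)$, we can write $f = u + v$ with $u\in\langle Y\rangle^{d+1}\cdot Q^0[Y]_>$ and $v\in\mI\cdot Q^0[Y]_>$. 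The lowest $Y$-degree of $u$ is at least $d+1$, so $\initial_w(v) = \initial_w(f) = g$, and because $>$ prioritizes lowest $Y$-degree we infer $\LM_>(v) = \LM_>(g) = \LM_>(f)$. But $v\in\mI$ together with the standard-basis characterization $\LM_>(\mI) = \langle\LM_>(G_\mI)\rangle$ forces $\LM_>(v)\in\langle\LM_>(G_\mI)\rangle$, contradicting the assumption on $\LM_>(f)$. Hence $\nu_\ma(\phi(f)) = d = -\deg_w(f)$, and writing $g = \sum_{|\alpha|=d} c_\alpha Y^\alpha$ with $c_\alpha\in Q^0$, the description of $\lambda$ in Lemma~\ref{lem:graal} yields $\initial_\ma(\phi(f)) = \overline{\phi(g)} = \sum_{|\alpha|=d}\overline{c_\alpha}\cdot\overline{f^\alpha} = \lambda(g_\ini) = \lambda(f_\ini)$ in $\ma^d/\ma^{d+1}$.

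\textbf{Main obstacle.} The pivotal (if short) step is the identity $\LM_>(v) = \LM_>(f)$: it rests on the fact that $>$ is a weighted ordering with $Y$-variables of negative weight, so no cancellation in $v = f - u$ can touch the lowest $Y$-degree part of $f$. Once this is in hand, the standard-basis characterization closes the argument. The diagram in the statement collects the two exact sequences of Lemmas~\ref{lem:Al} and~\ref{lem:graal}, together with the compatibility just established on elements $f$ with $\LM_>(f)\notin\LM_>(G_\mI)$.
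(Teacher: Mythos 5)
Your proof is correct and fills in exactly the details that the paper's own proof leaves to the reader — the paper merely asserts that the formula "follows immediately from $\ma=\langle \phi(f_1),\ldots,\phi(f_k)\rangle$, $\mI=\langle f_1-Y_1,\ldots,f_k-Y_k\rangle$ and our chosen ordering $>$," without giving the reduction to the normal-form case or the leading-monomial contradiction you spell out. Your key step (that $>$ prioritizes lowest $Y$-degree, so the lowest-degree part of $f$ cannot be cancelled against an element of $\langle Y\rangle^{d+1}\cdot Q^0[Y]_>$, forcing $\LM_>(v)=\LM_>(f)\in\LM_>(\mI)$ and hence a contradiction with the standard-basis property) is precisely the content the authors compress into "our chosen ordering $>$."
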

\begin{proof}
  The diagram is merely a concatenation of the sequences in Lemma \ref{lem:Al} and \ref{lem:graal}.

  The formula for the initial form and valuation follow immediately from $\ma=\langle \phi(f_1),\linebreak\ldots,\phi(f_k)\rangle$, $\mI=\langle f_1-Y_1,\ldots,f_k-Y_k \rangle$ and our chosen ordering $>$.
\end{proof}

\begin{remark}
Note that if $G$ is a standard basis for $\mI\unlhd Q^0[Y_1,\ldots,Y_s]_>$ with respect to our chosen weighted ordering $>$, then so is $\{\initial_{w}(g) \mid g\in G\}$ for $\initial_{w}(\mI)$. This implies that a standard basis of $\mI$ yields a Gr\"obner basis of $\mI_\ini$.
\end{remark}

\begin{remark}\label{remark}
For practical reasons, it is recommended to compute the minimal polynomial of $K$ over $k(U)$, as we have a natural isomorphism $K\cong Q^0/J^0=k(U)[V]/J^0$. This can be done by applying a generic coordinate transformation
\[ \varphi: Q^0=k(U)[V_1,\ldots,V_n] \rightarrow k(U)[T_1,\ldots,T_{n-1},T]  \]
to push it into general position with respect to a lexicographical ordering (see \cite{GP2008}, Proposition 4.2.2). Depending on the lexicographical ordering, $\varphi(J^0)$ has a Gr\"obner basis of the form $\langle T_1-g_1,\ldots, T_{n-1}-g_{n-1},g\rangle$, where $g_1,\ldots,g_{n-1},g\in k(U)[T]$. We then obtain
\[K \cong Q^0/J^0\cong k(U)[T_1,\ldots,T_{n-1},T]/\varphi(J^0)\cong k(U)[T]/\langle g\rangle, \]
where the last isomorphism is defined by $V_i\mapsto g_i$, for $i=1,\ldots,n-1$.
\end{remark}

\section{Applications}\label{Applications}

In this section we demonstrate how to exploit of the $\ma$-adic topology in several calculations on $A_L$. We give some exemplary questions which allow us to compute in $\Gr_{\ma}(A_L)$ and lift the result back to $A_L$.

\subsection{Dimension}

It is well known that the dimension of a local Noetherian local ring coincides with that of its associated graded ring (see \cite{GP2008} Theorem 5.6.2),
\[ \dim(A_L)=\dim(\Gr_{\ma}(A_L)). \]
Therefore, we can determine the dimension of $A_L$ by computing the dimension of $\Gr_{\ma}(A_L)$.

\subsection{Regularity}\label{regularity}
As we have mentioned in the introduction, localization of affine coordinate rings at the origin is an established tool in the study of singularities. In the nature of these studies, the object of interest are localizations which are not regular.

\begin{definition}\label{regular}
$A_L$ is a regular local ring if \[\dim_{\bK}(\ma/\ma^2)=\dim A_L.\]
\end{definition}

Geometrically, this means that the irreducible subvariety defined by $L$ is not contained in the singular locus of the affine variety given by $A$.
The representation of $\Gr_{\ma}(A_L)$ in Lemma \ref{lem:graal} implies following criterion for the regularity of $A_L$:

\begin{proposition}\label{prop:regularity}
We have
\begin{align*}
  A_L \text{ regular } \quad & \Longleftrightarrow \quad \Gr_{\ma}(A_L) \text{ isomorphic to a polynomial ring } \\
  & \Longleftrightarrow \quad \mI_\ini \text{ generated by linear elements.}
\end{align*}
\end{proposition}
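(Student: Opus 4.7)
The plan is to prove the two equivalences separately, using Lemma~\ref{lem:graal} for the second one and a standard commutative-algebra argument for the first.

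For the equivalence $A_L$ regular $\Longleftrightarrow$ $\Gr_\ma(A_L)$ isomorphic to a polynomial ring, I would argue in two directions. Assume $A_L$ is regular of dimension $d$. By Definition~\ref{regular}, $\dim_K(\ma/\ma^2)=d$, so there exists a regular system of parameters $x_1,\ldots,x_d$ whose images form a $K$-basis of $\ma/\ma^2$ and which therefore generate $\Gr_\ma(A_L)$ as a $K$-algebra. Hence there is a graded surjection $K[T_1,\ldots,T_d]\twoheadrightarrow\Gr_\ma(A_L)$ sending $T_i\mapsto\ini_\ma(x_i)$. Since $\dim\Gr_\ma(A_L)=\dim A_L=d$ equals the Krull dimension of the source, the kernel must be zero (any nonzero homogeneous element would cut the dimension). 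Conversely, if $\Gr_\ma(A_L)\cong K[T_1,\ldots,T_r]$, then $\dim_K(\ma/\ma^2)=\dim_K(\Gr_\ma(A_L))_1=r=\dim\Gr_\ma(A_L)=\dim A_L$, and regularity follows from Definition~\ref{regular}.

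For the equivalence $\Gr_\ma(A_L)$ polynomial $\Longleftrightarrow$ $\mI_\ini$ generated by linear elements, I would use Lemma~\ref{lem:graal} to identify $\Gr_\ma(A_L)=K[Y]/\mI_\ini$ as graded $K$-algebras (note that $\mI_\ini$ is $Y$-homogeneous by construction of the weight $w$). If $\mI_\ini$ is generated by linear forms, extract a basis $\ell_1,\ldots,\ell_m$ of the degree-one part of $\mI_\ini$, apply a $K$-linear coordinate change on the $Y_i$ so that $\ell_i=Y_i$ for $i\le m$, and immediately read off $K[Y]/\mI_\ini\cong K[Y_{m+1},\ldots,Y_s]$. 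Conversely, if $K[Y]/\mI_\ini$ is a graded polynomial ring in $d$ variables, the Hilbert series forces $\dim_K(\mI_\ini)_1=s-d$; picking a basis of this linear part and letting $L\subseteq\mI_\ini$ be the ideal they generate, the quotient $K[Y]/L$ is itself a polynomial ring in $d$ variables, with the same Hilbert series as $K[Y]/\mI_\ini$. The natural graded surjection $K[Y]/L\twoheadrightarrow K[Y]/\mI_\ini$ is therefore an isomorphism, so $\mI_\ini=L$ is generated in degree one.

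The main obstacle is the second direction of the second equivalence: one must convert the abstract assumption ``$K[Y]/\mI_\ini$ is a polynomial ring'' into concrete information about the generators of $\mI_\ini$. The trick is that the grading is not extrinsic data but is determined by the $\ma$-adic filtration, so the isomorphism must respect it, and once this is acknowledged the Hilbert series comparison provides the required rigidity. The remaining steps are essentially bookkeeping with a linear change of variables on the $Y_i$ and with classical dimension theory, neither of which should present any real difficulty.
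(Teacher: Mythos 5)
Your proof is correct, but it decomposes the statement differently from the paper and proves each piece by a different mechanism. The paper establishes the three-way equivalence in a single pass: it takes a reduced Gr\"obner basis $H$ of the homogeneous ideal $\mI_\ini$, uses the linear elements of $H$ to eliminate $s-d$ of the variables $Y_i$ (where $d=\dim_K(\ma/\ma^2)$), writes $\Gr_\ma(A_L)\cong K[Y_1,\ldots,Y_d]/\mI_\ini'$ with $\mI_\ini'$ generated by the substituted nonlinear Gr\"obner basis elements, and then argues by a height computation that $\mI_\ini'=0$ if and only if $\dim\Gr_\ma(A_L)=d$. You instead split the chain into two independent biconditionals: the first (regular $\Leftrightarrow$ polynomial ring) you prove via the standard regular-system-of-parameters argument and the dimension identity $\dim A_L=\dim\Gr_\ma(A_L)$; the second (polynomial ring $\Leftrightarrow$ $\mI_\ini$ linearly generated) you prove via Lemma~\ref{lem:graal} together with a Hilbert-series comparison between $K[Y]/L$ and $K[Y]/\mI_\ini$, where $L$ is the subideal generated by the degree-one part. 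Your route avoids Gr\"obner basis machinery entirely and handles the delicate direction ``polynomial ring $\Rightarrow$ linear generators'' with a clean graded rigidity argument, whereas the paper's elimination argument is more computational in flavor; both rely on $\dim A_L=\dim\Gr_\ma(A_L)$. One small thing worth making explicit in your first equivalence: the claim that a nonzero kernel would drop the dimension of $K[T_1,\ldots,T_d]$ uses that this is a catenary domain (so $\dim(K[T]/J)\le d-1$ for any nonzero proper ideal $J$); and in the reverse direction the identification $\dim_K(\Gr_\ma(A_L))_1=r$ presumes the polynomial-ring generators sit in degree $1$, which holds because $\Gr_\ma(A_L)$ is generated in degree $1$ as a $K$-algebra -- a sentence acknowledging this would tighten the argument.
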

\begin{proof}
Let $H:=\{h_1,\ldots,h_n\}$ be a reduced Gr\"obner basis of $\mI_\ini$, therefore $H$ contains only homogeneous elements.
Let $\dim_{\bK}(\ma/\ma^2)=d$ and without loss of generality suppose $\{Y_1,\ldots,Y_d\}$ is an independent set of variables over $\mI_\ini$. Then $\Gr_{\ma}(A_L)\cong\bK[Y_1,\ldots,Y_d]/\mI_\ini'$, where $\mI_\ini'$ is generated by appropriate linear transformations of all the nonlinear elements of $H$. If $\mI_\ini'=\langle 0\rangle$, it is clear that $\dim(\Gr_{\ma}(A_L))=d$ and thus $A_L$ is regular.

Conversely, suppose $\mI_\ini'\neq\langle 0\rangle$, i.e.~$\Gr_{\ma}(A_L)$ is not a polynomial ring, and suppose without loss of generality that $Y_d$ is a divisor of one of the terms of one of the elements of $\mI_\ini'$.
Then, since  $\bK[Y_1,\ldots,Y_d]/\mI_\ini'$ is a finitely generated $\bK$ algebra, $\dim(\Gr_{\ma}(A_L))=\dim\left(\bK[Y_1,\ldots,Y_d]/\mI_\ini')/\langle Y_1,\ldots, Y_{d-1}\rangle\right)+\textnormal{height} \langle Y_1,\ldots Y_{d-1}\rangle\le0+d-1$. Hence $A_L$ is not regular.
\end{proof}

\begin{example}\label{ex:regularity}
  Consider the affine surface $X$ defined by $y^2+x^3-x^2z^2\in\Q[x,y,z]$. Intersecting $X$ with affine planes $z-t$ yield nodal curves with singular point $(0,0,t)$ for $t\neq 0$, which degenerate into a cuspidal curve for $t=0$.
  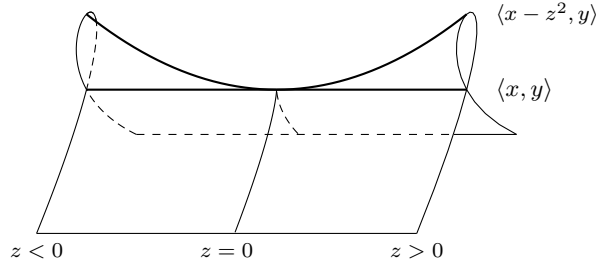
\begin{figure}[h]
    \centering
    \begin{tikzpicture}[x={(0 cm,-2 cm)}, y={(-0.7 cm,-0.7 cm)}, z={(5 cm,0 cm)}, scale=0.5]
      \draw[densely dashed, domain = -1.5:-1] plot ({(\x)^2-1},{\x^3 - \x},-1);
      \draw[domain = -1:0.5] plot ({(\x)^2-1},{\x^3 - \x},-1);
      \draw[densely dashed, domain = 0:1] plot ({(\x)^2-1},{\x^3 - \x},-1);
      \draw[domain = 1:1.5] plot ({(\x)^2-1},{\x^3 - \x},-1);

      \draw[densely dashed, domain = -0.93:0] plot ({(\x)^2},{\x^3},0);
      \draw[domain = 0:1.16] plot ({(\x)^2},{\x^3},0);

      \draw[domain = -1.5:1.5] plot ({(\x)^2-1},{\x^3 - \x},1);

      \draw (1.25,1.875,-1) -- (1.25,1.875,1);

      \draw[dashed, shorten >= 0.825cm] (1.25,-1.875,-1) -- (1.25,-1.875,1);
      \draw[shorten <= 4.175cm] (1.25,-1.875,-1) -- (1.25,-1.875,1);

      \draw[thick] (0,0,-1) -- (0,0,1);
      \draw[thick, domain = 0:1] plot (-\x^2,0,\x);
      \draw[thick, domain = -1:0] plot (\x^2,0,\x);
      \node[anchor=west, font=\footnotesize] at (0,0,1.1) {$\langle x,y\rangle$};
      \node[anchor=west, font=\scriptsize] at (-1,0,1.1) {$\langle x-z^2,y\rangle $};

      \node[anchor=north, font=\scriptsize] at (1.25,1.875,-1) {$z<0$};
      \node[anchor=north, font=\scriptsize] at (1.25,1.875,0) {$z=0$};
      \node[anchor=north, font=\scriptsize] at (1.25,1.875,1) {$z>0$};
    \end{tikzpicture}
    \caption{$V(y^2+x^3-x^2z^2)$}
    \label{fig:systemOfParameters}
  \end{figure}

  Consequently, the singular locus of $X$ is the line given by
  \[ L:=\langle x, y\rangle\unlhd A:=\Q[x,y,z] / \langle y^2+x^3-x^2z^2\ \rangle. \]
  Indeed, setting $f_1:= x$ and $f_2:= y$ we see that
  \[ z^2 \cdot f_1^2+ f_2^2 = f_1^3\in A\subseteq A_L, \]
  which means in the isomorphism induced by Lemma \ref{lem:graal} that $z^2\cdot Y_1^2+Y_2^2=0\in\Gr_\ma(A_L)$. By Proposition \ref{prop:regularity}, $A_L$ is then not regular.

  On the other hand for
  \[ L':=\langle x-z^2, y\rangle \unlhd A:=\Q[x,y,z] / \langle y^2+x^3-x^2z^2\ \rangle.  \]
  we can compute that $\Gr_\ma(A_L) = K[Y_1,Y_2]$,
  where $K=\Q[T]/T^2$ and $Y_1,Y_2$ represent the generators $x-z^2, y$. Therefore, $A_{L'}$ is regular.
\end{example}

\subsection{System of parameters}

A system of parameters can be thought of as a local coordinate system, which determines the points around the subvariety up to finite ambiguity.

\begin{definition}
Let $d:=\dim(A_L)$. Then $\{a_1,\ldots,a_d\}$ is called a system of parameters of $A_L$, if $\langle a_1,\ldots, a_d\rangle$ is $\ma$-primary. 
If $\langle a_1,\ldots, a_d\rangle=\ma$ then it is called a regular system of parameters.
\end{definition}

Note that if $a_1,\ldots, a_d\in A_L$ is such that the radical of $\langle\initial_{\ma}(a_1),\ldots,\initial_{\ma}(a_d)\rangle\unlhd \Gr_{\ma}(A_L)$ is $\langle Y_1,\ldots, Y_s\rangle/\mI_\ini$, then for $I:={\langle a_1,\ldots, a_d\rangle}$, we have $(\sqrt{I}\cap\ma^i)/\ma^{i+1}=\ma^i/\ma^{i+1}$ for all $i$. But, since $\cap_i\ma^i=0$, this implies that $\sqrt{I}=\ma$.

Hence, knowing that an ideal in $A_L$ or $\Gr_{\ma}(A_L)$ is primary if its radical is maximal, finding a system of parameters for $A_L$ boils down to finding a set of homogeneous elements $\{\lambda_1,\ldots,\lambda_d\}\subset\bK[Y_1,\ldots, Y_s]$ such that the radical of $\bH=\langle \lambda_1,\ldots,\lambda_d\rangle +\mI_\ini$ is $\langle Y_1,\ldots, Y_s\rangle$.

Now, it follows from linear algebra that for a generic choice of $c_{ij}\in Q^0$, $\lambda_{i}=\sum c_{ij}Y_j$ satisfies the above condition (which can be tested by computing the dimension of $\bH$). Choosing $a_i$ as $\sum c_{ij}{f}_i$, where $\langle {f}_1,\ldots, f_s\rangle=L\subseteq A\subseteq A_L$, we have that $\initial_{\ma}(a_i)=\lambda_i$ and we are done.

A regular system of parameters only exists when $A_L$ is regular, which can be checked using the method described in Section \ref{regularity}. In such a case, a regular system of parameters can be determined in the same way, except checking whether $\bH$ equals $\langle Y_1,\ldots, Y_s\rangle$ instead of checking whether the radical of $\bH$ equals $\langle Y_1,\ldots, Y_s\rangle$.

\begin{example}
  For $A_L$ as in Example \ref{ex:regularity}, a simple possible system of parameters is $\{y\}$. It means that if we look alongside the line given by $L\unlhd A$, the function $y\in A$ distinguishes the points of $A$ in the near vicinity of $L$ up to finite ambiguity as in Figure \ref{fig:systemOfParameters}.
  \begin{figure}[h]
    \centering
    \begin{tikzpicture}
      \draw[densely dotted, domain = -1.2:1.2] plot ({(\x)^2},{\x^3});
      \draw[dashed, domain = -1.5:1.5] plot ({(\x)^2-1},{\x^3 - \x});
      \draw (-0.5,2) -- (-0.5,-2);
      \fill (0,0) circle (2pt);
      \node[anchor=west] (L) at (1,0) {$V(L)$};
      \draw[->,shorten >= 0.3cm] (L) -- (0,0);
      \node[anchor=west] (A) at (1.5,1.75) {$X(A)$};
      \node[anchor=north] (y) at (-0.5,-2) {$y=\varepsilon$};
      \node[anchor=west] at (y.east) {for some $\varepsilon<0$};
    \end{tikzpicture}
    \caption{$V(y^2+x^3-x^2z^2)$ around $V(x,y)$}
    \label{fig:systemOfParameters}
  \end{figure}
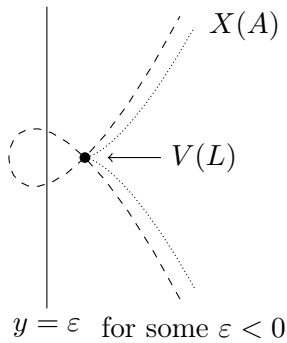
\end{example}

\subsection{Hilbert-Samuel function and Hilbert-Samuel polynomial}
For a homogeneous ideal $I\unlhd \Gr_\ma(A_L)$, i.e.~an ideal with an homogeneous preimage, we denote the Hilbert function of $I$ by
\[\Hilb_{\Gr_\ma(A_L)}(I,n)=\dim_K (\Gr_\ma(A_L)/I)_n.\]
Since the the Hilbert-Samuel function of an ideal $J\unlhd A_L$, denoted by $\HilbS_{A_L}(J)$, evaluated at $n$ equals $\dim(J/(\ma^n\cdot J))$ by definition, it follows that \[\HilbS_{A_L}(J,n)=\sum_{i=1}^n\Hilb_{\Gr_\ma{A_L}}(\initial_\ma(J),i-1).\]
Suppose the Hilbert polynomial of $\initial_\ma(J)\unlhd\Gr_\ma(A_L)$ is of the form
\begin{equation}\label{Hilb}
\HilbP_{\Gr_\ma(A_L)}(\initial_\ma(J),x)=\sum_{\nu=0}^{s-1}a_\nu\binom{x}{\nu},
\end{equation}
for $a_\nu\in\Z$, so that $\HilbP_{\Gr_\ma(A_L)}(\initial_\ma(J),n)=\Hilb_{\Gr_\ma(A_L)}(\initial_\ma(J),n)$ for $n$ sufficiently large. Then the Hilbert Samuel polynomial is given by
\begin{equation}\label{HilbS}
\HilbSP_{A_L}(J,x):=\sum_{\nu=1}^{s}a_{\nu-1}\binom{x}{\nu}+c,
\end{equation}
where $c=\dim(J/(\ma^l\cdot J))-\sum_{\nu=1}^{s}a_{\nu-1}\binom{l}{\nu}$, for any sufficiently large $l$, so that $\HilbSP_{A_L}(J,n)=\HilbS_{A_L}(J,n)$. And, by Corollaries 5.1.5 and 5.5.5 in \cite{GP2008}, any $l\ge d$ is sufficiently large, where $d$ is the degree of the second Hilbert series of $\initial_\ma(J)$. 

Taking the relationship between (\ref{Hilb}) and (\ref{HilbS}) into account it suffices to compute $\HilbP_{\Gr_\ma(A_L)}(\initial_\ma(J))$ to determine $\HilbSP_{A_L}(J)$. Since $\Gr_\ma(A_L)/\initial_\ma(J)\cong K[Y]/(\mI_\ini+\initial_\ma(J))$ it follows that $\Hilb_{\Gr_\ma(A_L)}(J_\ini )=\Hilb_{K[Y]}(\mI_\ini+\initial_\ma J)=\Hilb_{K[Y]}(L(\mI_\ini+\initial_\ma J))$, where $L(I)$, denotes the leading ideal of $I$.

\begin{remark}
  This algorithm is still subject to ongoing work, because algebraic field extensions over transcendental field extensions are not available in Singualr yet.
\end{remark}

\subsection{Syzygies and Resolutions}\
For this section, fix an ideal $I\unlhd A_L$ and consider an $A_L$-free resolution
\[ 0 \longleftarrow A_L/I \longleftarrow A_L \longleftarrow M_1 \longleftarrow M_2 \longleftarrow \cdots, \]
where, for sake of simplicity, all $M_i$ are free $A_L$ modules of finite rank, and all maps are compatible with the filtrations. That means for example that the first two maps are of the form
\begin{center}
  \begin{tikzpicture}[description/.style={fill=white,inner sep=2pt}]
    \matrix (m) [matrix of math nodes, row sep=0.5em, column sep=0em, text height=1.5ex, text depth=0.25ex]
    { g_{i,0} & \longmapsfrom  & e_{i,1} \\
      A_L     & \longleftarrow & M_1     \\
      \\
              &                & M_1     & \longleftarrow & M_2     \\
              &                & g_{i,1} & \longmapsfrom  & e_{i,2} \\ };
    \node[anchor=west] at (m-2-3.east) {$=\, \bigoplus_{i=1}^{k_1}A_L(-\nu_\ma(g_{i,0})),$};
    \node[anchor=west] at (m-4-5.east) {$=\, \bigoplus_{i=1}^{k_2}A_L(-\ord(g_{i,1})),$};
  \end{tikzpicture}
\end{center}
where $e_{i,1}$, $e_{i,2}$ refer to the canonical basis elements of $M_1$ and $M_2$, and $g_{i,0}$, $g_{i,1}$ refer to their images respectively. Moreover, $\ord$ denotes the order function of the filtration on $M_1$, which has been twisted so that $\ord(1\cdot e_{i,1})=0+\nu_\ma(g_{i,0})$, hence the map takes the $n$-th filtration module of $M_1$ to the $n$-th filtration module of $A_L$ for any $n\in\N$.

\begin{definition}
  Let $M$ be an $A_L$-module as above, say $M=\oplus_{i=1}^k A_L(-d_i)$. Then its \emph{associated graded module} is the free graded $\Gr_\ma(A_L)$-module given by
  \[ \Gr_\ma(M) := \bigoplus_{i=1}^k \Gr_\ma(A_L)(-d_i), \]
  so that for any $d\in\N$ its degree $d$ component is given by
  \[ \Gr_\ma(M)_d := \bigoplus_{i=1}^k \Gr_\ma(A_L)_{d+d_i} = \bigoplus_{i=1}^k \ma^{d+d_i}/ \ma^{d+d_i+1}. \]

  For any $g=\sum_{i=1}^k g_i\cdot e_i \in M$, say $\ord(g)=n$, the \emph{initial form} of $g$ is defined to be
  \[ \initial_\ma(g):=\sum_{i=1}^k \overline{g}_i\cdot e_i \in \Gr_\ma(M)_{n}\subseteq \Gr_\ma(M), \]
  so that $\overline{g}_i\cdot e_i = 0 \in \Gr_\ma(M)_{n}$ for all $\ord(g_i\cdot e_i)>\ord(g)$.
  The initial module of a submodule $M'\subseteq M$ is naturally defined to be $\initial(M'):=\langle \initial(g)\mid g\in M'\rangle$.
\end{definition}

It is straightforward to see how our $A_L$-free resolution of $I$ yields a graded, $\Gr_\ma(A_L)$-free resolution of $\ini_\ma(I)$ due to the functorial nature of taking initial forms:
\begin{center}
\begin{tikzpicture}[description/.style={fill=white,inner sep=2pt}]
\matrix (m) [matrix of math nodes, row sep=1.5em,column sep=0em,%
text height=1.5ex, text depth=0.25ex]
{ 0 & \longleftarrow & A_L/I & \longleftarrow & A_L & \longleftarrow & M_1 & \longleftarrow & M_2 & \longleftarrow & \cdots\phantom{.} \\
  0 & \longleftarrow & \Gr_\ma(A_L)/\initial_\ma(I) & \longleftarrow & \Gr_\ma(A_L) & \longleftarrow & N_1 & \longleftarrow & N_2 & \longleftarrow & \cdots. \\ };
\path[->](m-1-3) edge node[auto] {$\ini_\ma$}(m-2-3);
\path[->](m-1-5) edge node[auto] {$\ini_\ma$}(m-2-5);
\path[->](m-1-7) edge node[auto] {$\ini_\ma$}(m-2-7);
\path[->](m-1-9) edge node[auto] {$\ini_\ma$}(m-2-9);
\end{tikzpicture}
\end{center}

Given the notation from the very beginning, the first two maps of our resolution would be of the form
\begin{center}
  \begin{tikzpicture}[description/.style={fill=white,inner sep=2pt}]
    \matrix (m) [matrix of math nodes, row sep=0.5em, column sep=0em, text height=1.5ex, text depth=0.25ex]
    { \initial_\ma(g_{i,0}) & \longmapsfrom  & e_{i,1} \\
      \Gr_\ma(A_L)          & \longleftarrow & N_1     \\
      \\
              &                & N_1     & \longleftarrow & N_2     \\
              &                & \initial_\ma(g_{i,1}) & \longmapsfrom  & e_{i,2} \\ };
    \node[anchor=west] at (m-2-3.east) {$=\, \bigoplus_{i=1}^{k_1}\Gr_\ma(A_L)(-\nu_\ma(g_{i,0})),$};
    \node[anchor=west] at (m-4-5.east) {$=\, \bigoplus_{i=1}^{k_2}\Gr_\ma(A_L)(-\ord(g_{i,1})),$};
  \end{tikzpicture}
\end{center}
where, for sake of simplicity, we abuse notation and use $e_{i,1}$ respectively $e_{i,2}$ to refer to the canonical basis elements of both $M_1$ and $N_1$ respectively $M_2$ and $N_2$.

The goal of this section is to establish an inverse process, in which we will lift a graded $\Gr_\ma(A_L)$-free resolution of $\initial_\ma(I)$ to an $A_L$-free resolution of $I$. For that, fix a graded $\Gr_\ma(A_L)$-free resolution for the remainder of the chapter
\[ 0 \longleftarrow \Gr_\ma(I)/\initial_\ma(I) \longleftarrow \Gr_\ma(A_L) \longleftarrow N_1 \longleftarrow N_2 \longleftarrow \cdots. \]

Roughly speaking, the lift of the whole resolution consists of repeated lifts of syzygies over $\Gr_\ma(A_L)$ to syzygies over $A_L$, i.e. for any free $\Gr_\ma(A_L)$-module $N=N_j$ in our resolution and two sets $\Theta=\{\theta_1,\ldots,\theta_k\}\subseteq N$ and $\Delta=\{\delta_1,\ldots,\delta_k\}\subseteq M:=M_j$ with $\initial_\ma(\delta_i)=\theta_i$, compute for a given $\eta\in \syz(\Theta)$ a $\gamma\in \syz(\Delta)$ with $\initial_\ma(\gamma)=\eta$. In our setting, both $\Theta$ and $\Delta$ will typically be systems of generators of previous syzygy modules, while $\eta$ is the image of a canonical basis element in the resolution over $\Gr_\ma(A_L)$ and $\gamma$ will then be the image of a canonical basis element in the resolution over $A_L$.

\begin{center}
\begin{tikzpicture}[description/.style={fill=white,inner sep=2pt}]
  \matrix (m) [matrix of math nodes, row sep=1.5em, column sep=0em, text height=1.5ex, text depth=0.25ex]
  { \ldots & \longleftarrow & M & \longleftarrow & \bigoplus_{i=1}^k A_L(-\ord(\delta_i))  \\
    \ldots & \longleftarrow & N & \longleftarrow & \bigoplus_{i=1}^k \Gr_\ma(A_L)(-\deg(\theta_i)) & \longleftarrow & \ldots \\ };
  \node[anchor=south,yshift=0.35cm] (delta) at (m-1-3.north) {$\Delta$};
  \node[anchor=north,yshift=-0.35cm] (theta) at (m-2-3.south) {$\Theta$};
  \draw[draw opacity=0] (delta) -- node[sloped] {$\subseteq$} (m-1-3);
  \draw[draw opacity=0] (theta) -- node[sloped] {$\subseteq$} (m-2-3);

  \path[->](m-1-3) edge node[auto, font=\scriptsize] {$\ini_\ma$}(m-2-3);
  \path[->](m-1-5) edge node[auto, font=\scriptsize] {$\ini_\ma$}(m-2-5);

  \node[anchor=south,yshift=0.35cm] (gamma) at (m-1-5.north) {$\gamma$};
  \node[anchor=north,yshift=-0.35cm] (eta) at (m-2-5.south) {$\eta$};
  \draw[draw opacity=0] (gamma) -- node[sloped] {$\in$} (m-1-5);
  \draw[draw opacity=0] (eta) -- node[sloped] {$\in$} (m-2-5);

  \node[anchor=north,yshift=-0.35cm] (ei) at (m-2-7.south) {$e_i$};
  \draw[draw opacity=0] (ei) -- node[xshift=0.9cm] {$\longmapsfrom$} (eta);

  \node[anchor=east, xshift=0.1cm, yshift=0.03cm] at (gamma.west) {\phantom{$\gamma$}find};
  \node[anchor=west, xshift=-0.1cm, yshift=0.03cm] at (gamma.east) {such that $\initial_\ma(\gamma)=\eta$};

  \node[anchor=west, yshift=1.2cm, font=\scriptsize] at (m-2-6.west) {$\bigg(\begin{tabular}{c}so that here \\ $\gamma \mapsfrom e_i$\end{tabular}\bigg)$};
\end{tikzpicture}
\end{center}

Suppose $N=\oplus_i \Gr_\ma(A_L)(-d_i)$ and $M:=\oplus_i A_L(-d_i)$. Then Lemma \ref{lem:graal} immediately induces a surjective morphism $\lambda$ from the free $K[Y]$-module $\widehat N=\oplus_i K[Y](-d_i)$ onto $N$ with kernel $\ker(\lambda)=\mI_\ini\cdot \widehat N$ and Lemma \ref{lem:Al} induces a surjective morphism $\phi$ from a free $Q^0[Y]_>$-module $\widehat M=\oplus_i Q^0[Y]_>(-d_i)$ onto $M$ with kernel $\ker(\phi)=\mI\cdot \widehat M$. Note that $\widehat N$ and $\widehat M$ are shifted similarly to their images so that $\lambda$ remains a graded map of degree $0$ and $\phi$ remains compatible with the filtrations.

Moreover, if we consider a weight on the module variables and an ordering on the module monomials of $\widehat M$ that are compatible with the filtration, Proposition \ref{prop:AlGraal} immediately carries over to our module setting. For that, we extend our existing weight vector $w$ to weight the canonical basis elements $e_i$ with weight $d_i$. Note that our lifted resolution will induce a sequence on the free $Q^0[Y]_>$-modules $\widehat M$. Extend $>$ to the Schreyer ordering on the module monomials on $\widehat M$.

Note that a Gr\"obner basis of $\ker(\lambda)=\mI_\ini\cdot \widehat N$ and a standard basis $\ker(\phi)=\mI\cdot \widehat M$ are not hard to obtain, as they can be easily derived from the Gr\"obner basis of $\mI_\ini$ and the standard basis of $\mI$ respectively.

\begin{center}
  \begin{tikzpicture}[description/.style={fill=white,inner sep=2pt}]
    \matrix (m) [matrix of math nodes, row sep=1.75em,column sep=1em,text height=1.5ex, text depth=0.25ex]
    { 0 &\longrightarrow& \mI \cdot \widehat M &\longrightarrow& \widehat M &\overset{\phi}{\longrightarrow}& M &\longrightarrow& 0\phantom{.} \\
      0 &\longrightarrow& \mI_\ini \cdot \widehat N &\longrightarrow& \widehat N &\overset{\lambda}{\longrightarrow}& N &\longrightarrow& 0. \\ };
    \path[->, shorten <= 3pt, shorten >= 3pt, font=\scriptsize] (m-1-3) edge node[right] {$()_\ini$} (m-2-3);
    \path[->, shorten <= 3pt, shorten >= 3pt, font=\scriptsize] (m-1-5) edge node[right] {$()_\ini$} (m-2-5);
    \path[->, shorten <= 3pt, shorten >= 3pt, font=\scriptsize] (m-1-7) edge node[right] {$\initial_\ma()$} (m-2-7);
    \node[anchor=south,yshift=0.5cm, xshift=0.5cm] (delta) at (m-1-7.north) {$\bigoplus_i A_L(-d_i)$};
    \node[anchor=north,yshift=-0.35cm, xshift=0.5cm] (theta) at (m-2-7.south) {$\bigoplus_i \Gr_\ma(A_L)(-d_i)$};
    \draw[draw opacity=0] (delta) -- node[sloped] {$=$} (m-1-7);
    \draw[draw opacity=0] (theta) -- node[sloped] {$=$} (m-2-7);
    \node[anchor=north,yshift=-0.35cm, xshift=-0.5cm] (H) at (m-2-5.south) {$\bigoplus_i K[Y](-d_i)$};
    \draw[draw opacity=0] (H) -- node[sloped] {$=$} (m-2-5);
    \node[anchor=south,yshift=0.5cm, xshift=-0.5cm] (G) at (m-1-5.north) {$\bigoplus_i Q^0[Y]_>(-d_i)$};
    \draw[draw opacity=0] (G) -- node[sloped] {$=$} (m-1-5);
  \end{tikzpicture}
\end{center}

Now let $\{\delta_1,\ldots,\delta_k\}$ be a generating set of a submodule $M'\leq M$ such that $\{\theta_1,\ldots,\theta_k\}$, where $\theta_i:=\initial_\ma(\delta_i)$, generates $N':=\initial_\ma(M')\leq N$. To lift syzygies of $(\theta_1,\ldots,\theta_k)$ to syzygies of $(\delta_1,\ldots,\delta_k)$ will require a standard basis of $\phi^{-1}(M')\leq\widehat M$ as well as a representation of the basis elements via suitable preimages of $\delta_i$.

Since our utmost goal is to avoid complicated computations over $A_L$, it stands out of question to compute this standard basis of $\phi^{-1}(M')$ from scratch. However, this is not needed since Gr\"obner bases of $\phi^{-1}N'$ can be naturally lifted to standard bases of $\phi^{-1}(M')$.

\begin{center}
  \begin{tikzpicture}[description/.style={fill=white,inner sep=2pt}]
    \matrix (m) [matrix of math nodes, row sep=1.75em,column sep=1em,text height=1.5ex, text depth=0.25ex]
    { 0 &\longrightarrow& \mI \cdot \widehat M &\longrightarrow& \widehat M &\overset{\phi}{\longrightarrow}& M &\longrightarrow& 0\phantom{.} \\
      0 &\longrightarrow& \mI_\ini \cdot \widehat N &\longrightarrow& \widehat N &\overset{\lambda}{\longrightarrow}& N &\longrightarrow& 0. \\ };
    \path[->, shorten <= 3pt, shorten >= 3pt, font=\scriptsize] (m-1-3) edge node[right] {$()_\ini$} (m-2-3);
    \path[->, shorten <= 3pt, shorten >= 3pt, font=\scriptsize] (m-1-5) edge node[right] {$()_\ini$} (m-2-5);
    \path[->, shorten <= 3pt, shorten >= 3pt, font=\scriptsize] (m-1-7) edge node[right] {$\initial_\ma()$} (m-2-7);
    \node[anchor=south,yshift=0.5cm] (Mprime) at (m-1-7.north) {$M'$};
    \node[anchor=base west,xshift=-0.15cm] at (Mprime.base east) {$\supseteq \{ \delta_1,\ldots,\delta_k \}$};
    \node[anchor=north,yshift=-0.35cm] (Nprime) at (m-2-7.south) {$N'$};
    \node[anchor=base west,xshift=-0.15cm] at (Nprime.base east) {$\supseteq \{ \theta_1,\ldots,\theta_k \}$};
    \draw[draw opacity=0] (Mprime) -- node[sloped] {$\leq$} (m-1-7);
    \draw[draw opacity=0] (Nprime) -- node[sloped] {$\leq$} (m-2-7);
    \node[anchor=north,yshift=-0.35cm] (H) at (m-2-5.south) {$\lambda^{-1}N'$};
    \node[anchor=base east,xshift=0.15cm] at (H.base west) {$\{ h_1,\ldots,h_k \}\subseteq$};
    \node[anchor=south,yshift=0.425cm] (G) at (m-1-5.north) {$\phi^{-1}M'$};
    \node[anchor=base east,xshift=0.15cm] at (G.base west) {$\{ g_1,\ldots,g_k \}\subseteq$};
    \draw[draw opacity=0] (H) -- node[sloped] {$\leq$} (m-2-5);
    \draw[draw opacity=0] (G) -- node[sloped] {$\leq$} (m-1-5);
    \node[anchor=north,yshift=-0.5cm] (HDash) at (H.south) {$\{h_1',\ldots,h_l',h_{l+1}',\ldots\}$};
    \draw[draw opacity=0] (HDash) -- node[sloped] {$\subseteq$} (H);
    \node[anchor=base west] (HDashExpl0) at (HDash.base east) {standard basis with};
    \node[anchor=base west, yshift=-0.65cm] (HDashExpl1) at (HDash.base east) {$h_j'=\sum_{i=1}^k q_{ji} \cdot h_i + s_j$};
  \end{tikzpicture}
\end{center}

For that, consider preimages $g_1,\ldots,g_k$ of $\delta_1,\ldots,\delta_k$ with $\LM_>(g_i)\notin \LM_>(\mI\cdot\widehat M)$, so that $h_1,\ldots,h_k$, where $h_i:=g_{i,\ini}$, are homogeneous preimages of $\theta_1,\ldots,\theta_k$ with $\LM_>(h_i)\notin\LM_>(\mI_\ini\cdot\widehat N)$. Consider a homogeneous Gr\"obner basis $\{h_1',\ldots,h_l',h_{l+1}',\ldots\}$ of $\lambda^{-1}N'$ with $\LM_>(h_i')\notin\LM_>(\mI_\ini\cdot\widehat N)$ for $1\leq i\leq l$ and $\LM_>(h_i')\in\LM_>(\mI_\ini\cdot\widehat N)$ otherwise. Then there exists $q_{ji}\in Q^0[Y]$ such that
  \[ h_j'=\sum_{i=1}^k q_{ji} \cdot h_i + s_j \text{ for some } s_i\in \mI_\ini \text{ for all } i=1,\ldots,l. \]
Since all $h_j'$ and $h_i$ are homogeneous (with variables $Y$), we may assume that all $q_{ji}$ are weighted homogeneous with $-\deg_w(q_{ji})=\deg(h_i')-\deg(h_j)$, unless $q_{ji}=0$. Moreover, we may assume that $\LM_>(q_{ji})\notin \mI$ unless $q_{ji}=0$, as this can always be achieved by modifying $s_i$, which in particular implies that $\nu_\ma(\phi(q_{ji}))=-\deg_w(q_{ji})$.

With this, we have everything necessary to formulate our algorithm for lifting Gr\"obner bases.

\begin{algorithm}[H]
\caption{lifting Gr\"obner bases}
\label{liftingGB}
\begin{algorithmic}[1]
\Require{Given two submodules $M'\leq M$ and $N'\leq N$ with $\initial_\ma(M')=N'$,
  \begin{enumerate}[leftmargin=*]
  \item a standard basis $G_0$ of $\mI\cdot \widehat M$,
  \item $G=\{g_1,\ldots,g_k\}\subseteq \widehat M$ with $\LM_>(g_i)\notin\LM_>(\mI\cdot\widehat M)$, 
  \item $\{h_1',\ldots,h_l',h_{l+1}',\ldots\}$ a Gr\"obner basis of $\lambda^{-1}N'$ with $\LM_>(h_i')\notin\LM_>(\mI_\ini\cdot\widehat N)$ for $1\leq i\leq l$ and $\LM_>(h_i')\in\LM_>(\mI_\ini\cdot\widehat N)$ otherwise,
  \end{enumerate}}
\algstore{liftingGroebnerBases}
\end{algorithmic}
\end{algorithm}

\begin{algorithm}[H]
\begin{algorithmic}[1]
\algrestore{liftingGroebnerBases}
\Require{(continued)
  \begin{enumerate}[leftmargin=*]
  \setcounter{enumi}{3}
  \item $q_{ji}\in Q^0[Y]$ homogeneous with respect to weight vector $w$ and $\LM_>(q_{ji})\notin \mI$ unless $q_{ji}=0$ such that
    \[ h_j'=\sum_{i=1}^k q_{ji} \cdot h_i + r_j \text{ for all } j=1,\ldots,l, \]
    where $h_i:=g_{i,\ini}$ for $i=1,\ldots,k$.
  \end{enumerate}}
\Ensure{$G'=\{g_1',\ldots,g_l'\}\subseteq \widehat M$ and $q_{ji}'\in Q^0[Y]$ such that
  \begin{enumerate}
  \item $G' \cup G_0$ is a standard basis of $\phi^{-1}M'$,
  \item $g_j'=\sum_{i=1}^k q_{ji}'\cdot g_i + r_j$ for some $r_j\in \mI$,
  \item $\ord( \phi(g_j')) =\ord (\phi(q_{ji}'\cdot g_i))$ unless $q_{ji}'\cdot g_i=0$.
  \end{enumerate}}
\For{$j=1,\ldots,l$}
  \State Compute
    \[ g_j':= \NF\Big(\sum_{i=1}^k q_{ji} \cdot g_i,G_0\Big)\in Q^0[Y]. \]
  \If{$\LM_>(g_j')=s_j\cdot Y^\alpha$ for some $s_j\in Q^0$, $s_j\neq 1$}
    \State Find $a_j \in Q^0$ such that $\overline{a}_j\cdot \overline{s}_j=1 \in K=Q^0/J^0$.
    \State Redefine
      \[ g_j':= \NF(a_j \cdot g_j', G_0). \]
  \Else
    \State Set $a_j:=1\in Q^0$.
  \EndIf
\EndFor
\Return{$\{g_1',\ldots,g_l'\}$ and $(a_i\cdot q_{ji})$.}
\end{algorithmic}
\end{algorithm}
\begin{proof}
  In order to show that $G'\cup G_0$ is a standard basis of $\phi^{-1}M'$, we need show that $\LM_>(g_{j,\ini}')=\LM_>(h_j')$ for all $j=1,\ldots,l$ first. For sake of clarity, rename $g_j'$ as it appears in Steps 2 to 5 to $g_j''$, and set
  \begin{align*}
   r_{j,1} &:= \sum_{i=1}^k q_{ji} \cdot g_i - \NF\Big(\sum_{i=1}^k q_{ji} \cdot g_i,G_0\Big) \text{ as in Step 2}, \\
   r_{j,2} &:= \begin{cases} a_j \cdot g_j'' - \NF(a_j \cdot g_j'', G_0) &\text{ as in Step 5 if } s_j\neq 1, \\ 0 &\text{ otherwise,} \end{cases}
  \end{align*}
  so that $g_j'' = \sum_{i=1}^k q_{ji} \cdot g_i - r_{j,1}$ and $g_j' = a_j\cdot g_j''-r_{j,2}$, or rather
  \[ g_j' = a_j\cdot \sum_{i=1}^k q_{ji} \cdot g_i - a_j\cdot r_{j,1} - r_{j,2}. \]
  By definition of $>$ and unless $r_{j,1}=0$ or $r_{j,2}=0$, we have
  \begin{align*}
    \deg_w(r_{j,1})&\leq \deg_w\Big(\sum_{i=1}^k q_{ji} \cdot g_i\Big), \\
    \deg_w(r_{j,2})&\leq \deg_w(a_j \cdot g_j'') \leq  \deg_w\Big(a_j \cdot \sum_{i=1}^k q_{ji}\cdot g_i\Big).
  \end{align*}
  Setting $c_1$ and $c_2$ to be either $1$ or $0$ depending on the disparity of weighted degree, together, this yields
  \begin{equation}
    \label{eq:gDashIn}
    g_{j,\ini}' = \underbrace{\Big(a_j\cdot \sum_{i=1}^k q_{ji} \cdot g_i\Big)_\ini}_{=a_j\cdot h_j' \text{ by } (4)} - c_1\cdot \underbrace{(a_j\cdot r_{j,1})_\ini}_{\in\mI_\ini} -\; c_2\cdot \underbrace{(r_{j,2})_\ini}_{\in\mI_\ini}. \tag{*}
  \end{equation}
  Because $\LM_>(g_j')\notin \LM_>(\mI\cdot\widehat M)$ due to the normal form computations, we hence must have $\LM_>(g_{j,\ini}')\notin \LM_>(\mI_\ini\cdot\widehat N)$, which leaves $\LM_>(g_{j,\ini}')=\LM_>(a_j\cdot h_j')=\LM_>(h_j')$ as the only possibility.

  To show that $G'\cup G_0$ is a standard basis of $\phi^{-1}M'$, consider an element $f\in \phi^{-1}M'$.
  If $\LM_>(f)\in \LM_>(\mI \cdot \widehat M)$, then by assumption (1) there exists an element of $G_0$ with leading monomial dividing $\LM_>(f)$.

  If $\LM_>(f)\notin \LM_>(\mI \cdot \widehat M)$, then in particular $\LM_>(f_\ini)\notin \LM_>(\mI_\ini\cdot \widehat N)$, and by assumption (3) there exists an $j=1,\ldots,l$ such that $\LM_>(h_j')\mid \LM_>(f_\ini)$.
  Suppose $s_i\neq 1$ during Step $3$ in the $j$-th iteration of the \texttt{for} loop. In Step $4$ we then find an $a_j\in Q^0$ such that $a_j\cdot s_j=r+1$ for some $r\in J^0$. By construction of $\mI$ and the choice of our monomial ordering $>$, we therefore have $\LM_>(g_i')=1\cdot Y^\alpha$ after the normal form computation in Step $5$.
  Either way, before the end of our \texttt{for} loop iteration in Step $6$, we have
  \[ \LM_>(g_i')=Y^\alpha =\LM_>(g_{i,\ini}')=\LM_>(h_i') \mid \LM_>(f_\ini). \]
  And since $\LM_>(f_\ini) = \LM_>(f)\mid_{V=1}$ it follows that $\LM_>(g_i')\mid \LM_>(f)$.

  Moreover, our previous considerations imply for all $j=1,\ldots,l$
  \[ g_j' = a_j\cdot \sum_{i=1}^k q_{ji} \cdot g_i - a_j\cdot r_{j,1} - r_{j,2} = \sum_{i=1}^k q_{ji}' \cdot g_i - \underbrace{a_j\cdot r_{j,1} - r_{j,2}}_{\in \mI\cdot \widehat M}, \]
  showing the second condition of our output.

  For the last condition note that we have
  \begin{center}
    \begin{tikzpicture}[description/.style={fill=white,inner sep=2pt}]
      \node (11) at (0,0) {$\ord(\phi(g_j'))$};
      \node[anchor=base west, xshift=-0.2cm] (110) at (11.base east) {$=$};
      \node[anchor=base west, xshift=-0.2cm] (12) at (110.base east) {$\deg(\lambda(a_j\cdot h_j'))$};
      \node[anchor=north, yshift=-0.5cm] (14) at (12.south) {$-\deg_w(h_i')$};
      \draw[draw opacity=0] (14) -- node[sloped] {$=$} (12);
      \node[anchor=base west, xshift=-0.2cm] (140) at (14.base east) {$=$};
      \node[anchor=base west, xshift=-0.2cm] (15) at (140.base east) {$-\deg_w(q_{ji})-\deg_w(h_i)$};
      \node[anchor=base west, xshift=-0.2cm] (150) at (15.base east) {$=$};
      \node[anchor=base west, xshift=-0.2cm] (16) at (150.base east) {$-\deg_w(q_{ji})-\deg_w(g_i)$};
      \node[anchor=south, yshift=0.5cm] (17) at (16.north) {$\ord(\phi(q_{ji}))+\ord(\phi(g_i))$};
      \draw[draw opacity=0] (17) -- node[sloped] {$=$} (16);

      \node[anchor=south, yshift=0.5cm, text width=5cm, text centered, font=\scriptsize] (110expl) at (110.north) {$\LM_>(g_j')\!\!\notin\!\!\LM_>(\mI\!\cdot\!\widehat M)$\\and (*)};
      \draw[->, shorten <= 3pt, shorten >= 3pt] (110expl) -- (110);
      \node[anchor=north, yshift=-0.5cm, font=\scriptsize] (140expl) at (140.south) {$\deg_w(q_{ji})=\deg_w(h_j')-\deg_w(h_i)$};
      \draw[->, shorten <= 3pt, shorten >= 3pt] (140expl) -- (140);
      \node[anchor=north, yshift=-0.5cm, font=\scriptsize] (150expl) at (150.south) {$g_{i,\ini}=h_i$};
      \draw[->, shorten <= 3pt, shorten >= 3pt] (150expl) -- (150);
    \end{tikzpicture}
  \end{center}
  in which we also use that $\LM_>(h_i')\notin \mI_\ini \cdot\widehat N$, $\LM_>(q_{ji})\notin \mI$ and $\LM_>(g_i)\notin \mI \cdot\widehat M$ for the correlation between degree resp. order and weighted degree.
\end{proof}

Now that we have this algorithm for lifting standard bases of preimages in $\widehat N$ to the corresponding preimage in $\widehat M$, we can write down our algorithm for lifting syzygies over $\Gr_\ma(A_L)$ to syzygies over $A_L$.




\begin{center}
  \begin{tikzpicture}
    \node (gr0) at (-2.5,0,5) {$\cdots$};
    \node (gr1) at (0,0,5) {$N$};
    \node (gr2) at (4,0,5) {$\bigoplus_{i=1}^k \Gr_\ma(A_L)(-\deg\theta_i)$};
    \node (al0) at (-2.5,2.5,5) {$\cdots$};
    \node (al1) at (0,2.5,5) {$M$};
    \node (al2) at (4,2.5,5) {$\bigoplus_{i=1}^k A_L(-\nu_\ma(\delta_i))$};
    \node (ky0) at (-2.5,0,0) {$\cdots$};
    \node (ky1) at (0,0,0) {$\widehat N$};
    \node (ky2) at (4,0,0) {$\bigoplus_{i=1}^k K[Y](-\deg\theta_i)$};
    \node (ay0) at (-2.5,2.5,0) {$\cdots$};
    \node (ay1) at (0,2.5,0) {$\widehat M$};
    \node (ay2) at (4,2.5,0) {$\bigoplus_{i=1}^k Q^0[Y]_>(-\nu_\ma(\delta_i))$};
    \draw[->] (gr2) -- (gr1);
    \draw[->] (gr1) -- (gr0);
    \draw[->] (al2) -- (al1);
    \draw[->] (al1) -- (al0);
    \draw[->] (ay2) -- (ay1);
    \draw[->] (ay1) -- (ay0);
    \draw[->] (ky2) -- (ky1);
    \draw[->] (ky1) -- (ky0);
    \draw[->>] (ky1) -- node[anchor=south east] {$\lambda$} (gr1);
    \draw[->>] (ky2) -- node[anchor=south east] {$\lambda$} (gr2);
    \draw[->>] (ay1) node[anchor=north east, yshift=-5pt, xshift=-15pt] {$\phi$} -- (al1);
    \draw[->>] (ay2) node[anchor=north east, yshift=-5pt, xshift=-15pt] {$\phi$} -- (al2);
    \draw[->, dotted] (al1) --  (gr1);
    \draw[->, dotted] (al2) --  (gr2);
    \draw[->, dotted] (ay1) -- (ky1);
    \draw[->, dotted] (ay2) -- (ky2);
    \fill[white] (0,3.15,5) rectangle (1.5,3.65,5);
    \node[above,yshift=1.5em,font=\footnotesize] (d1) at (al1)
      {$\phantom{\{\delta_i\}_i=1^k}\Delta=\{\delta_i\}_{i=1}^k$};
    \draw[draw opacity=0] (d1) -- node[sloped] {$\subset$} (al1);
    \node[below,yshift=-1.5em,font=\footnotesize] (e1) at (gr1)
      {$\phantom{\{\theta_i\}_{i=1}^k=}\Theta=\{\theta_i\}_{i=1}^k$};
    \draw[draw opacity=0] (e1) -- node[sloped] {$\subset$} (gr1);
    \fill[white] (4,3.15,5) rectangle (5.5,3.55,5);
    \node[above,yshift=1.5em,font=\footnotesize] (M) at (al2)
      {$\phantom{\gamma\in{}}\syz(\Delta)\ni\gamma$};
    \draw[draw opacity=0] (M) -- node[sloped] {$\subset$} (al2);
    \node[below,yshift=-1.5em,font=\footnotesize] (inM) at (gr2)
      {$\phantom{\eta\in}\syz(\Theta)\ni\eta$};
    \draw[draw opacity=0] (inM) -- node[sloped] {$\subset$} (gr2);
    \node[above,yshift=1.5em,font=\footnotesize] (g1) at (ay1)
      {$\phantom{G'\subset{}_{i=1}^k}\{g_i\}_{i=1}^k\subseteq G'$};
    \draw[draw opacity=0] (g1) -- node[sloped] {$\subset$} (ay1);
    \node[above,yshift=1.5em,font=\footnotesize] (g2) at (ay2)
      {$g$};
    \draw[draw opacity=0] (g2) -- node[sloped] {$\in$} (ay2);
    \node[below,yshift=-1.5em,font=\footnotesize] (h1) at (ky1)
      {$\phantom{H'\subset{}_{i=1}^k}\{h_i\}_{i=1}^k\subseteq H'$};
    \draw[draw opacity=0] (h1) -- node[sloped] {$\subset$} (ky1);
    \node[below,yshift=-1.5em,font=\footnotesize] (h1) at (ky2)
      {$h$};
    \draw[draw opacity=0] (h1) -- node[sloped] {$\in$} (ky2);
  \end{tikzpicture}
\end{center}

\begin{algorithm}[H]
\caption{lifting syzygies}
\label{liftingS}
\begin{algorithmic}[1]
\Require{$\eta\in\syz(\theta_1,\ldots,\theta_k)\subseteq \bigoplus_{i=1}^k \Gr_\ma(A_L)(-\deg\theta_i)$ homogeneous and
  \begin{enumerate}[leftmargin=*]
  \item $g_i \in \widehat M$ with $\LM_>(g_i)\notin\LM_>(\mI\cdot \widehat M)$ such that $\initial_\ma(\phi(g_i))=\theta_i$.
  \item a standard basis $G_0$ of $\mI\cdot \widehat M$,
  \item a set $G'=\{g_1',\ldots,g_l'\}\subseteq\widehat M$ such that $G'\cup G_0$ is a standard basis of $\phi^{-1}M'$, where $M'=\langle \theta_1,\ldots,\theta_k \rangle$ and $q_{ji}\in Q^0[Y]$ weighted homogeneous with $\LM_>(q_{ji})\notin \LM_>(\mI\cdot \widehat M)$,
    \[ g_j'=\sum_{i=1}^k q_{ji} \cdot g_i  + r_j\in\widehat M \text{ for some } r_j\in \mI\cdot \widehat M, \]
    and $\ord(\phi(g_j'))=\ord(\phi(q_{ji}'\cdot g_i))$ unless $q_{ji}'\cdot g_i=0$, as in the output of Algorithm \ref{liftingGB}.
  \end{enumerate}}
\Ensure{$\gamma\in\syz(\Delta)\subseteq \bigoplus_{i=1}^k A_L(-\ord(\delta_i))$ such that $\ini_\ma(\gamma)=\eta$.}
\algstore{liftingSyzygies}
\end{algorithmic}
\end{algorithm}

\begin{algorithm}[H]
\begin{algorithmic}[1]
\algrestore{liftingSyzygies}
\State Pick a homogeneous representative $h$ of $\eta$, say
\[h=\sum_{i=1}^k c_{i,\initial}\cdot e_i \in \bigoplus_{i=1}^k K[Y](-\deg\theta_i) \]
with $c_i\in Q^0[Y]$ and $\LM_>(c_i)\notin\LM_>(\mI)$.
\State Compute a normal form
\[ r:=\NF\Big(\sum_{i=1}^k c_i \cdot g_i,G_0\Big)\in \widehat M. \]
\State Compute a standard representation
\[ r=\sum_{j=1}^l d_j\cdot g_j' \in \widehat M. \]
\State Set
\[ g:=\sum_{i=1}^k \Big(c_i-\sum_{j=1}^l q_{j,i}\cdot d_j\Big)\cdot e_i \in \bigoplus_{i=1}^k Q^0[Y]_>(-\ord(\delta_i)). \]
\Return{$\gamma:=\phi(g)\in\bigoplus_{i=1}^k A_L(-\ord(\delta_i))$}
\end{algorithmic}
\end{algorithm}
\begin{proof} To show that $\gamma$ lies in $\syz(\Delta)$, it suffices to show that the image of $g$ in $\widehat M$ lies in $\ker(\phi)=\mI\cdot\widehat M$. That image is given by
  {\allowdisplaybreaks
  \begin{align*}
    \textstyle\sum_{i=1}^k (c_i-\sum_{j=1}^l q_{j,i}\cdot d_j)\cdot g_i  & = \textstyle\sum_{i=1}^k c_i\cdot g_i - \textstyle\sum_{i=1}^k\sum_{j=1}^l q_{j,i}\cdot d_j\cdot g_i\\
    & = \textstyle\sum_{i=1}^k c_i\cdot g_i - \sum_{j=1}^l d_j\cdot \textstyle\sum_{i=1}^k q_{j,i}\cdot g_i\\
    & = \textstyle\sum_{i=1}^k c_i\cdot g_i - \sum_{j=1}^l d_j\cdot (g_j'-r_j)\\
    & = \underbrace{\textstyle\sum_{i=1}^k c_i\cdot g_i - r}_{\in\mI\cdot\widehat M} - \textstyle\sum_{j=1}^l \underbrace{d_j\cdot r_j}_{\in \mI\cdot\widehat M},
  \end{align*}}
  as $r=\NF(\sum_{i=1}^k c_i\cdot g_i,G_0)$ and $r_j\in\mI\cdot\widehat M$ for $j=1,\ldots,l$.

  It remains to show that $\eta = \initial_\ma(\gamma)$. Note that we have for any $j=1,\ldots,l$
  \begin{center}
    \begin{tikzpicture}
      \matrix (m) [matrix of math nodes, row sep=1.5em, column sep=0em, text height=1.5ex, text depth=0.25ex]
        { \ord(\phi(r)) & = & -\deg_w(r) & \leq & -\deg_w(d_j \cdot g_j') & \leq & \ord(\phi(d_j \cdot g_j')). \\ };
      \node[anchor=north, font=\scriptsize, yshift=-0.5cm, text width=1.5cm, text centered] (eq1) at (m-1-2.south) {Step 2 and Prop. \ref{prop:AlGraal}};
      \node[anchor=north, font=\scriptsize, yshift=-0.5cm, text width=2cm, text centered] (leq1) at (m-1-4.south) {Step 3 and definition of $>$};
      \node[anchor=north, font=\scriptsize, yshift=-0.5cm, text width=2cm, text centered] (leq2) at (m-1-6.south) {always as $\phi:\! Y\!\mapsto\! f$};
      \draw[->, shorten <= 3pt, shorten >= 3pt] (eq1) -- (m-1-2);
      \draw[->, shorten <= 3pt, shorten >= 3pt] (leq1) -- (m-1-4);
      \draw[->, shorten <= 3pt, shorten >= 3pt] (leq2) -- (m-1-6);
    \end{tikzpicture}
  \end{center}

  Furthermore, observe that $\eta\in\syz(\Theta)$ implies that $\sum_{{i}=1}^k \lambda(c_{i,\ini} \cdot h_{i})=0$. Using Proposition~\ref{prop:AlGraal}, our choice of $g_i\in\widehat M$ and $c_i\in Q^0[Y]$ therefore yields
  \begin{center}
    \begin{tikzpicture}
      \matrix (m) [matrix of math nodes, row sep=2em, column sep=-0.5em, text height=1.5ex, text depth=0.25ex]
        { \sum_{i=1}^k \initial_\ma(\phi(c_i\cdot g_i)) & = & \textstyle\sum_{i=1}^k &\initial_\ma(\phi(c_i)) &\cdot&\initial_\ma(\phi(g_i))  \\
                                                        &   & \textstyle\sum_{i=1}^k &\lambda(c_{i,\ini}) &\cdot&\lambda(g_{i,\ini}) & = & \textstyle\sum_{i=1}^k \lambda(c_{i,\ini} \cdot h_{i}) = 0, \\ };
      \draw[draw opacity=0] (m-1-4) -- node[sloped, midway] (eq1) {$=$} (m-2-4);
      \draw[draw opacity=0] (m-1-6) -- node[sloped, midway] (eq2) {$=$} (m-2-6);
      \node[anchor=west, font=\scriptsize, xshift=0.5cm] (expl2) at (eq2) {$\LM_>(g_i)\notin \LM_>(\mI\cdot \widehat M)$};
      \draw[->, shorten <= 1pt, shorten >= 1pt] (expl2) -- (eq2);
      \node[anchor=east, font=\scriptsize, xshift=-0.5cm] (expl1) at (eq1) {$\LM_>(c_i)\notin \LM_>(\mI\cdot \widehat M)$};
      \draw[->, shorten <= 1pt, shorten >= 1pt] (expl1) -- (eq1);
      \node[anchor=north, font=\scriptsize, yshift=-0.35cm, xshift=0.2cm] (expl3) at (m-2-6.south) {$g_{i,\ini}=h_i$};
      \draw[->, shorten <= 1pt, shorten >= 1pt, xshift=0.2cm] (expl3) -- (m-2-6);
    \end{tikzpicture}
  \end{center}
  which implies that $\ord(\sum_{i=1}^k\phi(c_i\cdot g_i)) > \ord(\phi(c_s\cdot g_s))$ for any $s=1,\ldots,k$.

  Together, we get for all $i=1,\ldots,k$ and all $j = 1,\ldots,l$
  \[ \ord(\phi(d_j\cdot g'_j))\ge \ord(\phi(r))>\ord(\phi(c_i\cdot g_i)). \]

  Recall that by assumption (4) $\ord(\phi(g_j'))=\ord(\phi(q_{ji}\cdot g_i))=\nu_\ma(\phi(q_{ji}))+\ord(g_i)$, which implies
  \[ \ord(e_i) = \ord(\phi(g_j')) - \nu_\ma(\phi(q_{ji})). \]

  Therefore
  \begin{align*}
    \ord(\phi(q_{ji}\cdot d_j\cdot e_i)) &= \nu_\ma(\phi(d_j)) + \ord (\phi(q_{ji}\cdot e_i)) \\
                                         &= \nu_\ma(\phi(d_j)) + \ord (\phi(q_{ji}\cdot g_i)) \\
                                         &= \nu_\ma(\phi(d_j)) + \ord (\phi(g_j')) \\
                                         &= \ord(\phi(d_j\cdot g'_j)) \\
                                         &> \ord(\phi(c_i\cdot g_i)) = \ord(\phi(c_i\cdot e_i)) \\
  \end{align*}
  so that
  \[\initial_\ma(\gamma)=\initial_\ma\left(\phi\Big(\sum_{i=1}^k \Big(c_i-\sum_{j=1}^l q_{j,i}\cdot d_j\Big)\cdot e_i \Big)\right) = \initial_\ma\left(\phi\Big(\sum_{i=1}^kc_i\cdot e_i\Big)\right)=\eta. \qedhere \]
\end{proof}

Now that we are able to lift syzygies with Algorithm \ref{liftingS}, we obtain as an immediate consequence:

\begin{corollary}
Let $I\unlhd A_L$ be an ideal. For any graded, $\Gr_\ma(A_L)$-free resolution $\mathscr{C}_\bullet$
there exist a graded, $A_L$-free resolution $\mathscr{D}_\bullet$ such that the following diagram commutes:
\begin{center}
\begin{tikzpicture}[description/.style={fill=white,inner sep=2pt}]
\matrix (m) [matrix of math nodes, row sep=1.5em,column sep=0em,%
text height=1.5ex, text depth=0.25ex]
{ \mathscr{C}_\bullet: \quad 0 & \longleftarrow & A_L/I & \longleftarrow & A_L & \longleftarrow & M_1 & \longleftarrow & M_2 & \longleftarrow & \cdots\phantom{,} \\
  \mathscr{D}_\bullet: \quad 0 & \longleftarrow & \Gr_\ma(A_L)/\initial_\ma(I) & \longleftarrow & \Gr_\ma(A_L) & \longleftarrow & N_1 & \longleftarrow & N_2 & \longleftarrow & \cdots, \\ };
\path[->](m-1-3) edge node[auto] {$\initial_\ma$}(m-2-3);
\path[->](m-1-5) edge node[auto] {$\initial_\ma$}(m-2-5);
\path[->](m-1-7) edge node[auto] {$\initial_\ma$}(m-2-7);
\path[->](m-1-9) edge node[auto] {$\initial_\ma$}(m-2-9);
\end{tikzpicture}
\end{center}
where if $N_k = \bigoplus_{j\in\Z} \Gr_\ma(A_L)(-j)^{b_{j-k,k}}$ we have $M_k = \bigoplus_{j\in\Z} A_L(-j)^{b_{j-k,k}}$.

In particular, if $A_L$ is regular, there always exists a finite $A_L$-free resolution of $I$.
\end{corollary}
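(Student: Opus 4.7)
The plan is to build $\mathscr{D}_\bullet$ inductively by repeatedly invoking Algorithm~\ref{liftingS}. First construct $M_1\to A_L$: for each canonical basis element $e_{i,1}\in N_1$, whose image in $\Gr_\ma(A_L)$ is some $\theta_i\in\initial_\ma(I)$, pick a preimage $g_i\in\widehat M$ with $\LM_>(g_i)\notin\LM_>(\mI\cdot\widehat M)$ as in Proposition~\ref{prop:AlGraal}, set $\delta_i:=\phi(g_i)$, and place $M_1:=\bigoplus_i A_L(-\nu_\ma(\delta_i))$. Because $\nu_\ma(\delta_i)=\deg(\theta_i)$ by Proposition~\ref{prop:AlGraal}, the graded twists match.

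For the inductive step, suppose differentials $d_1,\ldots,d_j$ have been constructed so that for each $k\leq j$ the image $\delta_{i,k}\in M_{k-1}$ of the basis vector $e_{i,k}\in M_k$ satisfies $\initial_\ma(\delta_{i,k})=\theta_{i,k}$, the image of the corresponding basis vector of $N_k$ in $N_{k-1}$. Since $N_{j+1}\to N_j\to N_{j-1}$ is zero, the images $\eta_1,\ldots,\eta_{k_{j+1}}$ of the canonical basis of $N_{j+1}$ in $N_j$ lie in $\syz(\theta_{1,j},\ldots,\theta_{k_j,j})$. Applying Algorithm~\ref{liftingS} to each $\eta_i$ with $\Theta:=\{\theta_{i,j}\}$ and $\Delta:=\{\delta_{i,j}\}$ yields $\gamma_i\in\syz(\Delta)\subseteq M_j$ with $\initial_\ma(\gamma_i)=\eta_i$, hence $\ord(\gamma_i)=\deg(\eta_i)$. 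Define $M_{j+1}:=\bigoplus_i A_L(-\ord(\gamma_i))$ with $d_{j+1}(e_{i,j+1}):=\gamma_i$. The shift bookkeeping is automatic: $N_k=\bigoplus_\mu\Gr_\ma(A_L)(-\mu)^{b_{\mu-k,k}}$ forces $M_k=\bigoplus_\mu A_L(-\mu)^{b_{\mu-k,k}}$.

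The main obstacle is verifying exactness of $\mathscr{D}_\bullet$. By construction the differentials respect the order filtration, $d_k\circ d_{k+1}=0$ (the $\gamma_i$ are syzygies), and the associated graded complex of $\mathscr{D}_\bullet$ is precisely $\mathscr{C}_\bullet$. Given $\gamma\in\ker(d_k)$, the initial form $\initial_\ma(\gamma)$ is a cycle in $\mathscr{C}_\bullet$, hence a boundary, so it can be written $\sum\bar c_i\eta_i$ with $\bar c_i\in\Gr_\ma(A_L)$; lifting to $c_i\in A_L$ gives $\sum c_i\gamma_i\in\im(d_{k+1})$ with $\initial_\ma(\sum c_i\gamma_i)=\initial_\ma(\gamma)$, so $\gamma-\sum c_i\gamma_i$ has strictly larger order. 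Iterating exhibits $\gamma\in\im(d_{k+1})+\ma^nM_k$ for every $n\in\N$. Since $A_L$ is Noetherian local and $M_k$ is finitely generated free, the successive corrections converge in the faithfully flat completion $\widehat{A_L}$, exhibiting $\gamma\otimes 1$ as a boundary in $\mathscr{D}_\bullet\otimes\widehat{A_L}$; faithful flatness (together with Krull's intersection theorem $\bigcap_n\ma^nM_k=0$) then forces $\gamma\in\im(d_{k+1})$. This ``lifting from the associated graded'' step is the standard separatedness argument underlying the theory of standard bases in local rings, and is the only place where real care beyond Algorithm~\ref{liftingS} is required.

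Finally, if $A_L$ is regular, then by Proposition~\ref{prop:regularity} $\Gr_\ma(A_L)\cong K[Y_1,\ldots,Y_d]$ is a polynomial ring, so by Hilbert's syzygy theorem $\Gr_\ma(A_L)/\initial_\ma(I)$ admits a graded free resolution of length at most $d$. Applying the lifting construction above yields a finite $A_L$-free resolution of $A_L/I$, proving the last assertion.
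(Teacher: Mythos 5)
Your argument follows essentially the same route as the paper's proof: inductively lift the differentials of the graded resolution $\mathscr{C}_\bullet$ to $A_L$ via Algorithm~\ref{liftingS}, keep track of the graded twists via $\nu_\ma(\delta_i)=\deg(\theta_i)$, and invoke Proposition~\ref{prop:regularity} together with Hilbert's Syzygy Theorem for the regular case. You have additionally spelled out the exactness step --- the standard separatedness argument via Krull's intersection theorem (equivalently, faithful flatness of the $\ma$-adic completion) showing that ``initial forms generate the initial of the syzygy module, hence the lifts generate the syzygy module'' --- which the paper states but does not justify.
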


\begin{proof}
Suppose $\eta_1,\ldots,\eta_k \in N_i$ are the images of the canonical basis elements of $N_{i+1}$, so that they generate the syzygy module of the images of the canonical basis elements of $N_i$.

We can then apply Algorithm \ref{liftingGB} to lift them to $\theta_1,\ldots,\theta_k\in M_i$, which we will set as the images of the canonical basis elements of $M_{i+1}$. By our algorithm, they are mapped to $0$ so that our resulting sequence $\mathscr{C}_\bullet$ of $\A_L$ modules is a complex.

However, we also know that the initial forms of $\eta_1,\ldots,\eta_k$ generate the initial of the syzygy module, hence the $\eta_i$ must generate the syzygy module and our complex is exact, giving us a $A_L$-free resolution $\mathscr{D}_\bullet$ of $A_L/I$.

Clearly finite graded $\Gr_\ma(A_L)$-free resolutions of $\initial_\ma(I)$ lifts to finite $A_L$-free resolutions.
If $\Gr_\ma(A_L)$ is isomorphic to a polynomial ring, i.e.~ $\Gr_\ma(A_L)$ is regular by Proposition \ref{prop:regularity},  it follows from Hilbert's Syzygy Theorem that a finite free-resolution of $\initial_\ma(I)$ exists.
\end{proof}

\begin{example}
  Consider the union of a circle on a hyperplane and the intersection of a twisted cubic with that hyperplane
  \[ I:=\langle (x-1)^2+y^2-3,z \rangle \cap \langle xz-y^2,yw-z^2,xw-yz,z\rangle \unlhd A:=\Q[x,y,z,w]. \]
  The twisted cubic makes the resolution of $I$ more complicated, a minimal resolution would be of the form
  \[ 0 \longleftarrow A/I \longleftarrow A \longleftarrow A^4 \longleftarrow A^5 \longleftarrow A^2 \longleftarrow 0. \]
  However if we localize at a subvariety outside the twisted cubic, say on two conjugate points on the circle,
  \[ L:=I + \langle x-1 \rangle = \langle z, x-y, y^2w-3w,y4-3y2 \rangle \unlhd A, \]
  the local ring will be of the form
  \[ A_L = \bigslant{\Q(w)[Y_1,\ldots,Y_s,x,y,z]}{\langle f_1-Y_1, \ldots, f_4-Y_4, 3wY_3-w^2Y_4+Y_3^2\rangle}, \]
  where $f_1,\ldots,f_4$ are the four generators of $L$ stated above.

  The associated graded ring of $A_L$ is then isomorphic to
  \[ \Gr_\ma (A_L) = K[Y_1,Y_2,Y_3,Y_4]/\langle 3\cdot Y_3-w\cdot Y_4 \rangle, \]
  where
  \begin{align*}
    K=\Quot(A/L)&\cong \Q(w)[t]/(t^2-4t-71), \\
       x,\,y,\,z=0&\mapsto 1,\,\textstyle\frac{3-t}{5},\,0,
  \end{align*}
  and the initial ideal of $I$ has the simple form of
  \[ \initial_\ma(I)=\langle w^2Y_4, Y_1 \rangle \unlhd \Gr_\ma(A_L). \]

  It is easy to see that $\initial_\ma(I)$ allows for a Koszul resolution
  \[ 0 \longleftarrow \Gr_\ma (A_L)/\initial_\ma(I) \longleftarrow \Gr_\ma (A_L) \overset{\left(\begin{smallmatrix} w^2Y_4\\Y_1\end{smallmatrix}\right)}{\longleftarrow} \Gr_\ma (A_L)^2 \overset{\left(\begin{smallmatrix} -Y_1 & w^2Y_4 \end{smallmatrix}\right)}{\longleftarrow} \Gr_\ma (A_L) \longleftarrow 0, \]
  which then lifts to an equally simple resolution
  \[ 0 \longleftarrow A_L/ I \longleftarrow A_L \overset{M_1}{\longleftarrow} A_L^2 \overset{M_2}\longleftarrow A_L \longleftarrow 0,  \]
  but with more complicated matrices
  \[ M_1=\begin{pmatrix} w^2Y_4+3w^2xY_2^2+3wY_2Y_3-Y_3^2 & Y_1 \end{pmatrix} \]
  and
  \[ M_2=\begin{pmatrix} -Y_1 \\ w^2 Y_4+3w^2Y_2^2+w^2Y_2Y_4-\frac{w^2}{9}Y_4^2+3w^2Y_2^3-Y_2Y_3^2+\frac{1}{3w}Y_3^3+\frac{1}{9}Y_3^2Y_4 \end{pmatrix}. \]
\end{example}

\section{acknowledgements}
We would like to thank Janko Boehm for insightful converstations and Theo Mora for clarifications of the current state of research.

\end{document}